\newtheorem{lemma}{Lemma}
\newtheorem{conj}{Conjecture}[section]
\newtheorem{theorem}{Theorem}[section]
\newtheorem{remark}[conj]{Remark}
\newtheorem{definition}[conj]{Definition}
\newtheorem{cor}[conj]{Corollary}
\newcommand\independent{\protect\mathpalette{\protect\independent}{\perp}} 
\def\independent#1#2{\mathrel{\rlap{$#1#2$}\mkern2mu{#1#2}}}
\newcommand{\R}{\mathbb{R}}
\newcommand{\A}{\alpha}
\newcommand{\Z}{\mathbb{Z}}
\def\phi{\varphi}
\def\bee{\begin{eqnarray*}}
\def\ene{\end{eqnarray*}}
\title{Reversals of R\'enyi Entropy Inequalities under Log-Concavity}
\author{
  James Melbourne\\
    University of Minnesota\\
  \texttt{melbo013@umn.edu}
  \and
  Tomasz Tkocz\\
  Carnagie Mellon University\\
  \texttt{ttkocz@andrew.cmu.edu}
}
\begin{document}

\maketitle

\begin{abstract}
We establish a discrete analog of the R\'enyi entropy comparison due to Bobkov and Madiman.  For log-concave variables on the integers, the min entropy is within $\log e$ of the usual Shannon entropy.  Additionally we investigate the entropic Rogers-Shephard inequality studied by Madiman and Kontoyannis, and establish a sharp R\'enyi version for certain parameters in both the continuous and discrete cases. 
\end{abstract}

\section{Introduction}
The R\'enyi entropy \cite{Ren61} is a family of entropies parameterized by an $\A$ belonging to the extended non-negative real line that generalizes several important notions of uncertainty, most significantly the Shannon entropy when $\alpha=1$, but also the min entropy $\alpha=\infty$, the collision entropy $\alpha= 2$, and the Hartley or max-entropy $\alpha= 0$.  
\begin{definition}
    For a random variable $X$ taking countably many values with probabilities $p_i >0$, and $\alpha \in (0,1) \cup (1,\infty)$, the R\'enyi entropy $H_\alpha (X)$ is
\begin{equation}
H_\alpha(X)= \frac{ \log \sum_i p_i^\alpha}{1- \A},
\end{equation}
while the remaining $\A$ are defined through continuous limits.  $H_0(X) = \log|\{i : p_i > 0\}|$, $H_1(X) = H(X) = - \sum_i p_i \log p_i$, and $H_\infty(X) = - \log \| p\|_\infty$, where $| \cdot |$ denotes cardinality, and $\| p \|_\infty$ denotes the $\ell_\infty$ norm of the sequence $p_i$.
\end{definition}

Similar definitions are put forth in the continuous setting.

\begin{definition}
    For an $\mathbb{R}^d$-valued random variable $Y$ with density function $f$, and $\A \in (0,1) \cup (1,\infty)$, the R\'enyi entropy $h_\alpha(Y)$ is
\begin{equation}
h_\alpha(Y)= \frac{ \log \int_{\mathbb{R}^d} f^\alpha(x) dx}{1- \alpha},
\end{equation}
while the remaining $\alpha$ are defined through continuous limits.  $h_0(X) = \log|\{x : f(x)>0\}|$, $h_1(X) = h(X) = - \int f(x) \log f(x) dx$, and $h_\infty(X) = - \log \| f\|_\infty$, where $| \cdot |$ denotes Lebesgue volume, and $\| f \|_\infty$ denotes the $L_\infty$ norm with respect to the Lebesgue measure.
\end{definition}

The R\'enyi entropy can usefully be described in terms of the $\alpha-1$-norm\footnote{This is not truly a norm when $\alpha < 2$},
\begin{align}
    H_\alpha(X) = \frac{\log \sum_i p_i^{\alpha -1} p_i}{\alpha -1} = - \log \| p \|_{\alpha-1,p}
\end{align}
where for a positive function $q$ and measure $\mu$ on a discrete set, and $s \in (-\infty,0) \cup (0,\infty)$ we define 
\begin{align}
\|q\|_{s,\mu} = \left( \sum_i q_i^s \mu_i \right)^{\frac 1 s}.
\end{align}
Thus it follows from Jensen's inequality that $H_p(X)$ is non-increasing in $\alpha$.  The same argument in the continuous setting obviously holds. Thus, for $\alpha < \beta$ 
\begin{align}
    H_\beta(X) &\leq H_\alpha(X)    \label{eq: Discrete monotonicity of Renyi entropy}
        \\
    h_\beta(Y) &\leq h_\alpha(Y).    \label{eq: continuous monotonicity of Renyi entropy}
\end{align}
It has been known since \cite{BM11:it} (see also \cite{BN12}) that inequality \eqref{eq: continuous monotonicity of Renyi entropy} can be reversed up to an additive constant for log-concave random variables. 
\begin{theorem}[Bobkov-Madiman, \cite{BM11:it}] \label{thm:BM}
Let $Y$ be random vector in $\mathbb{R}^d$ with density $f:\mathbb{R}^d \to [0,\infty)$ such that $f((1-t)x+ty) \geq f^{1-t}(x) f^t(y)$ for all $t \in (0,1)$ and $x,y \in \mathbb{R}^d$. Then
\begin{align}
    h(Y) \leq h_\infty(Y) + d.
\end{align}
\end{theorem}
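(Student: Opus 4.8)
The goal is to bound $h(Y) = -\int f \log f$ above by $-\log\|f\|_\infty + d$ for a log-concave density $f$ on $\mathbb{R}^d$. Write $M = \|f\|_\infty = f(x_0)$, where $x_0$ can be taken to be a mode of $f$ (which exists by log-concavity and integrability). The inequality we want is equivalently $\int f \log f \geq \log M - d$, i.e. $\int f(x) \log \frac{f(x)}{M}\, dx \geq -d$.

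First I would reduce to a normalized situation: by translating we may assume the mode is at the origin, and since the bound is affine-invariant in the appropriate sense (replacing $Y$ by $AY$ changes both $h(Y)$ and $h_\infty(Y)$ by $\log|\det A|$), nothing is lost in keeping $f$ general. The key object is the function $g(x) = f(x)/M \in [0,1]$, which is log-concave with $g(0)=1=\|g\|_\infty$. I want to show $\int g \log g \geq -d \int g$... but that is not scale-correct; instead the cleanest route is through the level-set / layer-cake representation. Write $\int f \log(M/f)\,dx = \int_0^\infty |\{x : f(x) < M e^{-s}\}^c \text{-type}|\,\ldots$; more precisely, using $\log(M/f(x)) = \int_0^\infty \mathbf{1}[f(x) < M e^{-s}]\,ds$ and Fubini,
\[
\int f(x)\log\frac{M}{f(x)}\,dx = \int_0^\infty \int_{\{f < Me^{-s}\}} f(x)\,dx\, ds = \int_0^\infty \big(1 - \mu(A_s)\big)\,ds,
\]
where $A_s = \{x : f(x) \geq M e^{-s}\}$ is a convex body (superlevel set of a log-concave function) and $\mu$ is the law of $Y$. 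So it suffices to prove $\int_0^\infty (1-\mu(A_s))\,ds \leq d$.

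The heart of the argument is a one-dimensional-type estimate controlling how fast the mass $\mu(A_s)$ saturates. Since $A_s$ is an increasing family of convex sets containing the mode, the natural tool is to compare $f$ against the extremal log-concave density with a given mode value and given superlevel-set structure — namely, the restriction to the direction of a ray, or better, use the fact that for the uniform density on a convex body $K$ the quantity $\int_0^\infty(1-\mu(A_s))\,ds$ should be computed exactly and turn out to equal something $\leq d$; then argue that uniform densities (equivalently, indicators) are the extremizers among log-concave densities for this functional. Concretely: $1 - \mu(A_s) = \mu(A_s^c)$, and by log-concavity the measure of the complement of the $t$-dilate-type region decays; the classical fact is that for a log-concave probability measure $\mu$ with density $f$ and mode value $M$, one has $f(x) \geq M\cdot(\text{something})$ forcing $\mu(\{f < Me^{-s}\})$ to be small for small $s$ and the integral to telescope to exactly $d$ in the worst case (attained by $f$ proportional to $\mathbf 1_{K}$, where $A_s = K$ for all $s>0$ gives integral $0$, and attained in the limit by suitable exponential-type densities).

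I expect the main obstacle to be establishing the sharp constant $d$ rather than a weaker $Cd$ — i.e., proving that $\int_0^\infty \mu(\{f < \|f\|_\infty e^{-s}\})\,ds \leq d$ with the right constant. The clean way is to pass to the radial/marginal structure: fix $M=1$ WLOG (rescaling $f$), and use that $t \mapsto |\{f \geq t\}|$ together with the log-concavity of $f$ lets one dominate $f$ pointwise, on each ray from the mode, by a one-dimensional exponential; then the $d$-dimensional integral factors through the one-dimensional computation $\int_0^\infty (1-e^{-s})\cdot(\text{Jacobian})\,ds$, and the Jacobian/dimension bookkeeping produces exactly the $+d$. An alternative, possibly slicker, is to invoke the known inequality $h(Y) \le h_\infty(Y) + d$ as a consequence of the reverse form of the fact that $\|f\|_\infty \le e^{d} \cdot \text{(value related to }h)$ via the entropy-power/variance comparisons for log-concave vectors already in the literature; but since the paper is surveying \cite{BM11:it}, I would present the self-contained layer-cake-plus-one-dimensional-reduction argument as above.
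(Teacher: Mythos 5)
This is a result the paper cites from \cite{BM11:it} rather than proves; in the paper's framework it is the special case $\alpha=1$, $\beta=\infty$ of Theorem \ref{thm:FLMW}, whose engine is the concavity of $t \mapsto \log\bigl(t^d \int f^t\bigr)$ (comparing the derivative of that function at $t=1$, which equals $d - h(Y)$, with its asymptotic slope $\log\|f\|_\infty$ immediately gives $h(Y) \le h_\infty(Y) + d$). Your layer-cake reduction is correct as far as it goes: writing $M = \|f\|_\infty$, the identity
\begin{align*}
h(Y) - h_\infty(Y) = \int f \log\frac{M}{f}\,dx = \int_0^\infty \bigl(1 - \mu(A_s)\bigr)\,ds, \qquad A_s = \{f \ge M e^{-s}\},
\end{align*}
is valid, and the theorem is indeed equivalent to $\int_0^\infty (1-\mu(A_s))\,ds \le d$.

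The gap is that you never prove this last inequality, which is the entire content of the theorem. Everything after the Fubini step is a list of candidate strategies hedged with ``should turn out to equal,'' ``I expect,'' and ``the bookkeeping produces exactly the $+d$''; no one of them is carried out, and the discussion of extremizers is also off (uniform densities give the \emph{minimum} value $0$ of this functional, while equality $=d$ is attained exactly, not merely in the limit, by the one-sided exponential product density). To close the gap along your own lines: set $u = \log(M/f)$, a nonnegative convex function vanishing at the mode, and let $K_s = \{u \le s\} = A_s$. Convexity of $u$ gives $K_s \supseteq x_0 + \frac{s}{t}(K_t - x_0)$ for $0 < s \le t$, hence $s \mapsto |K_s|/s^d$ is non-increasing. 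Writing $\int f\,u\,dx - d\int f\,dx = \int_0^\infty (s-1-d)e^{-s}M|K_s|\,ds$ by the layer-cake formula, substituting $|K_s| = s^d\psi(s)$ with $\psi$ non-increasing, and noting that $\int_0^\infty (s-1-d)e^{-s}s^d\,ds = \Gamma(d+2)-(d+1)\Gamma(d+1) = 0$ while $(s-1-d)$ changes sign exactly once, a single-crossing (Chebyshev) argument gives the required $\le 0$. Without some such argument, your proposal is a correct reformulation of the statement, not a proof of it.
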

Recently (see \cite{FMW16,LFM2019}), a sharp comparison result has been also established between entropies of arbitrary orders.
\begin{theorem}[Fradelizi-Li-Madiman-Wang, \cite{FMW16,LFM2019}]\label{thm:FLMW}
Under the assumptions of Theorem \ref{thm:BM}, for $\alpha < \beta$, we have
\begin{align}
    h_\alpha(Y) - h_\beta(Y) \leq h_\alpha(Z) - h_\beta(Z),
\end{align}
where $Z$ is a random variable with density $e^{-\sum_{i=1}^d x_i}$ on $(0,\infty)^d$.
\end{theorem}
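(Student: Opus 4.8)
\emph{Proof sketch.} Under an invertible affine map of $\R^d$ the R\'enyi entropy $h_\alpha$ changes only by an additive constant independent of $\alpha$, so $h_\alpha(Y)-h_\beta(Y)$ is affine-invariant; since a log-concave density is bounded, after rescaling we may assume $\|f\|_\infty=1$, and we work with $0<\alpha<\beta<\infty$ (the boundary orders $0,1,\infty$ following by continuity). The plan has three stages: (i) a level-set change of variables converting log-concavity of $f$ on $\R^d$ into concavity of a single one-variable function and each $\int f^\alpha$ into a Laplace transform; (ii) a reformulation of the desired inequality as the monotonicity in $\alpha$ of an explicit function; (iii) a proof of that monotonicity from the concavity of an associated cumulant generating function, which comes from Pr\'ekopa's theorem applied to a perspective function. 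I expect stage (i) to carry essentially all of the difficulty.

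\emph{Stage (i).} Write $f=e^{-V}$ with $V$ convex and set $\mu(t)=\vol(\{f>t\})$. Convexity of $V$ gives $\{V<(1-\lambda)c_0+\lambda c_1\}\supseteq(1-\lambda)\{V<c_0\}+\lambda\{V<c_1\}$, so the Brunn--Minkowski inequality shows that $g(s):=\mu(e^{-s})^{1/d}$ is non-negative, non-decreasing and concave on $[0,\infty)$ (the domain is $[0,\infty)$ because $\|f\|_\infty=1$). The layer-cake identity, followed by the substitutions $\tau=t^\alpha$ and $t=e^{-s}$, yields
\[
 \int_{\R^d} f^\alpha \;=\; \alpha\int_0^\infty t^{\alpha-1}\mu(t)\,dt \;=\; \alpha\int_0^\infty e^{-\alpha s}\,g(s)^d\,ds \;=:\; \alpha\,A(\alpha),
\]
with the normalization $\int f=1$ equivalent to $A(1)=1$. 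The extremal law $Z$ has simplicial level sets, so it corresponds to $g(s)=s/(d!)^{1/d}$, for which $A_Z(\alpha)=\alpha^{-(d+1)}$.

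\emph{Stages (ii)--(iii).} From $h_\alpha(f)=(1-\alpha)^{-1}\bigl(\log\alpha+\log A(\alpha)\bigr)$ we get
\[
 h_\alpha(f)-h_\beta(f)\;=\;\Bigl(\tfrac{\log\alpha}{1-\alpha}-\tfrac{\log\beta}{1-\beta}\Bigr)+\tfrac{\log A(\alpha)}{1-\alpha}-\tfrac{\log A(\beta)}{1-\beta},
\]
and the first parenthesis is independent of $f$; subtracting the same identity for $Z$, the theorem becomes the statement that
\[
 \phi(\alpha):=\frac{1}{1-\alpha}\log\frac{A(\alpha)}{A_Z(\alpha)}=\frac{\psi(\alpha)}{1-\alpha},\qquad \psi(\alpha):=\log\bigl(\alpha^{d+1}A(\alpha)\bigr),
\]
is non-decreasing on $(0,\infty)$. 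Since $\psi(1)=\log A(1)=0$, it is enough to prove that $\psi$ is concave: then $\alpha\mapsto\psi(\alpha)/(\alpha-1)=-\phi(\alpha)$, being the slope of a secant of the concave $\psi$ anchored at $(1,0)$, is non-increasing, so $\phi$ is non-decreasing (and continuous across $\alpha=1$). To see the concavity, substitute $s=u/\alpha$:
\[
 \alpha^{d+1}A(\alpha)\;=\;\int_0^\infty e^{-u}\bigl(\alpha\,g(u/\alpha)\bigr)^d\,du .
\]
The map $(\alpha,u)\mapsto\alpha\,g(u/\alpha)$ is the perspective of the concave function $g$, hence jointly concave and non-negative on its (convex) domain; composing with the concave increasing map $\log$ and adding the affine term $-u$ makes $(\alpha,u)\mapsto e^{-u}\bigl(\alpha g(u/\alpha)\bigr)^d$ jointly log-concave, so by Pr\'ekopa's theorem (marginals of log-concave functions are log-concave) its integral in $u$ is a log-concave function of $\alpha$; this is precisely the concavity of $\psi$.

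\emph{Main obstacle.} The genuinely nontrivial ingredient is stage (i): finding the reparametrization $f\mapsto g$ that at once linearizes log-concavity (Brunn--Minkowski turns it into concavity of $g$) and collapses every $\int f^\alpha$ into the one Laplace transform $A(\alpha)$. Granting that, stage (ii) is pure bookkeeping and stage (iii) rests on a single observation — that $\alpha^{d+1}A(\alpha)$ is a Laplace-type average of a perspective function, so Pr\'ekopa applies. Some routine care remains: a truncation-and-mollification argument to reduce to densities for which the change of variables is literally valid, and a separate treatment of the borderline orders $\alpha\in\{0,1,\infty\}$, where both sides of the inequality may be $+\infty$.
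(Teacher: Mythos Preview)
Your proof is correct and follows precisely the route the paper indicates (though does not itself carry out): the paper cites Theorem~\ref{thm:FLMW} from \cite{FMW16,LFM2019} and records only that the key step is the concavity of $t\mapsto\log\bigl(t\int f^t\bigr)$ --- more accurately $t^d\int f^t$ in dimension $d$, which is the form you establish. You supply a complete proof of this concavity via the level-set reparametrization, the perspective construction, and Pr\'ekopa's theorem, and then deduce the entropy comparison by the same secant-slope argument the paper spells out (for the discrete analog) in Section~\ref{sec: Varentropy conjecture}.
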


The key to this theorem is the following fact: for $f$ as in Theorem \ref{thm:BM}, the function $t \mapsto \log(t \int f^t)$ is concave on $(0,+\infty)$.


Our pursuit in this paper will be the reversal of \eqref{eq: Discrete monotonicity of Renyi entropy} for the log-concave random variables on $\mathbb{Z}$.  That is random variables $X$ which are unimodal, and such that the sequence $p_n = \mathbb{P}(X=n)$ satisfies $p_i^2 \geq p_{i-1} p_{i+1}$. We say that such random variables are monotone (or that their distributions are monotone) if the sequence $(p_n)$ is monotone. 
We will prove the following main theorem.\\

\begin{theorem} \label{thm: infinity comparison}
For $\alpha \in (0,\infty)$, a log-concave random variable $X$, and $Z_p$ a geometric random variable with parameter $p$,
\begin{align}
    H_\alpha(X) - H_\infty(X) 
        &< 
            \lim_{p \to 0} H_\alpha(Z_p) - H_\infty(Z_p)
                \\
        &=
            \log \alpha^{\frac 1 {\alpha -1}} \label{eq: infinity renyi and alpha}.
\end{align}
\end{theorem}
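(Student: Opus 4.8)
The equality \eqref{eq: infinity renyi and alpha} is a direct computation: for the geometric law $\pp(Z_p=n)=p(1-p)^n$, $n\ge 0$, one has $\sum_n\pp(Z_p=n)^\alpha=p^\alpha/(1-(1-p)^\alpha)$ and $\max_n\pp(Z_p=n)=p$, so that
\[
H_\alpha(Z_p)-H_\infty(Z_p)=\frac{\log p-\log\bigl(1-(1-p)^\alpha\bigr)}{1-\alpha},
\]
and letting $p\to 0$ with $1-(1-p)^\alpha=\alpha p+O(p^2)$ yields $\tfrac{-\log\alpha}{1-\alpha}=\log\alpha^{1/(\alpha-1)}$. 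The content is the strict inequality. Write $p_n=\pp(X=n)$, $M=\|p\|_\infty$, $q_n=p_n/M$, so $(q_n)$ is log-concave with $\sup_n q_n=1$ and $\sum_n q_n=1/M$; log-concavity forces (at least) geometric tail decay, so all sums below converge for every $\alpha>0$. A one-line rearrangement shows that, for $\alpha\ne 1$, the bound $H_\alpha(X)-H_\infty(X)<\log\alpha^{1/(\alpha-1)}$ is equivalent to
\[
\mathrm{sgn}(\alpha-1)\,\Bigl(\alpha\sum_n q_n^\alpha-\sum_n q_n\Bigr)>0,
\]
i.e.\ to the assertion that $g(\alpha):=\alpha\sum_n q_n^\alpha$ exceeds $g(1)=\sum_n q_n$ for $\alpha>1$ and lies below it for $\alpha<1$. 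The remaining case $\alpha=1$ is, since $\lim_{\alpha\to1}\log\alpha^{1/(\alpha-1)}=\log e$, precisely the bound $H(X)<H_\infty(X)+\log e$ --- the sharp discrete analogue of Theorem~\ref{thm:BM}.

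The plan is to reduce everything to that $\alpha=1$ case. Differentiating under the sum (legitimate by the geometric tail bound), $g'(\alpha)=\sum_n q_n^\alpha(1+\alpha\ln q_n)=\sum_n b_n(1+\ln b_n)$ with $b_n:=q_n^\alpha$, which is again log-concave with $\sup_n b_n=1$ and $\sum_n b_n<\infty$. Using natural logarithms for entropy (which just rescales the constant $\log e$ to $1$), set $S=\sum_m b_m$, $\pi_n=b_n/S$; then $\|\pi\|_\infty=1/S$ and a rearrangement gives
\[
g'(\alpha)=S+\sum_n b_n\ln b_n=S\bigl(1+\ln S-H(\pi)\bigr)=S\bigl(H_\infty(\pi)+1-H(\pi)\bigr).
\]
Thus $g'(\alpha)>0$ is exactly the (strict) discrete Bobkov--Madiman inequality $H(\pi)<H_\infty(\pi)+1$ for the log-concave law $\pi$. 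Consequently, once that inequality is known for every log-concave law on $\mZ$, we get $g'(\alpha)>0$ for all $\alpha>0$, so $g$ is strictly increasing; this yields the displayed equivalence for $\alpha\ne1$, and $\alpha=1$ is the input. That completes the reduction.

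There remains the sharp discrete Bobkov--Madiman inequality, equivalently $\sum_n b_n(1+\ln b_n)>0$ for every nonzero summable log-concave $(b_n)$ on $\mZ$ with $\sup_n b_n=1$. After a shift, let $0$ be a mode; split $\mZ$ into $\{n\ge0\}$ and $\{n\le0\}$ (overlapping only at $0$, where the summand is $b_0(1+\ln b_0)=1$) and reflect the left block. The problem reduces to the one-sided estimate: for a non-increasing, summable, log-concave $(a_m)_{m\ge0}$ with $a_0=1$,
\[
\sum_{m\ge0}a_m(1+\ln a_m)\ \ge\ \tfrac12,
\]
with $\tfrac12$ approached, but not attained, by $a_m=(1-p)^m$ as $p\to0$. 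Applying this to both blocks gives $\sum_n b_n(1+\ln b_n)\ge\tfrac12+\tfrac12-1=0$, and strict inequality follows because each block exceeds $\tfrac12$ for an actual sequence. I would prove the one-sided estimate by summation by parts in the spirit of the continuous argument $\int_0^\infty e^{-V}(1-V)\ge0$ (Theorem~\ref{thm:BM} in dimension one): with $a_m=e^{-V_m}$, $V$ convex, $V_0=0$, the convexity bound $V_m\le m(V_m-V_{m-1})$ together with $e^{x}-e^{y}\le e^{x}(x-y)$ produces a telescoping lower bound with vanishing boundary term, and the extra $\tfrac12$ is the Euler--Maclaurin half-endpoint correction $\tfrac12\,a_0(1+\ln a_0)=\tfrac12$.

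The main obstacle is this last step: extracting the \emph{sharp} constant $\tfrac12$ (equivalently $\log e$ in the theorem) rather than a crude additive gap. The delicate part is controlling the Euler--Maclaurin remainder in the deep tail, where $V_m$ is large, $1+\ln a_m<0$, and the naive trapezoid/midpoint comparison has the wrong sign; I expect one must isolate the near-mode ``slab'' from the tail, or reduce to the extremal geometric profile by a localization/smoothing argument, to handle it. By contrast, the algebraic reduction, the geometric computation, and the passage from order $1$ to all orders $\alpha$ through the tilts $b=q^\alpha$ are routine.
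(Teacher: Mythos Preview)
Your reduction of the general-$\alpha$ inequality to the case $\alpha=1$ via the identity $g'(\alpha)=S\bigl(H_\infty(\pi)+1-H(\pi)\bigr)$ is correct and elegant, and structurally different from the paper's route. The paper does not pass through $\alpha=1$; instead it uses that $H_\alpha$ is Schur-concave, constructs for any log-concave $f$ a two-sided geometric $\varphi$ with the same maximum and $f\succ\varphi$ (Theorem~\ref{thm: Schur extremizers}), and then handles two-sided geometrics by direct computation (Lemma~\ref{lem: two sided geo renyi bound}). Both approaches, however, bottom out at the same analytic kernel: the single-variable bound $\frac{x\ln x}{(1-x)^2}+\frac{1}{1-x}>\tfrac12$ for $x\in(0,1)$, which is precisely the lemma used in the paper to establish \eqref{eq:ineq-xy}.

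The genuine gap is the one-sided estimate $\sum_{m\ge0}a_m(1+\ln a_m)>\tfrac12$ for non-increasing log-concave $(a_m)$ with $a_0=1$, which you yourself flag as the main obstacle. Your Euler--Maclaurin/summation-by-parts sketch does not close it: the second derivative of $x\mapsto e^{-V(x)}(1-V(x))$ has no fixed sign, so neither the trapezoid nor the midpoint comparison controls the remainder uniformly, and the ``isolate the slab / localize'' idea is left unspecified. The clean fix is exactly the paper's majorization step. Since $\phi(t)=t(1+\ln t)$ is convex on $(0,\infty)$ with $\phi(0+)=0$, the functional $a\mapsto\sum_m\phi(a_m)$ is Schur-convex; by Lemma~\ref{lem: majorization} there is a geometric sequence $g_m=r^m$ with the same maximum and $\ell_1$-norm satisfying $a\succ g$, whence $\sum_m\phi(a_m)\ge\sum_m\phi(g_m)=\frac{1}{1-r}+\frac{r\ln r}{(1-r)^2}>\tfrac12$, the last inequality being the paper's computation that this function of $r$ is strictly decreasing on $(0,1)$ with limit $\tfrac12$ as $r\to1$. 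So your strategy can be completed, but not by summation by parts alone---it requires the same majorization-to-geometric reduction the paper uses, just applied to $\sum\phi(a_m)$ rather than to $H_\alpha$ directly.
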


This provides a discrete analog of Bobkov and Madiman's result, with a sharp constant provided by an extremizing sequence. Let us note, this is distinct from the continuous setting where for instance, an exponential distributions extremizes $h(X) - h_\infty(X)$ over log-concave distributions on $\mathbb{R}$.  It is consequence of the proof of Theorem \ref{thm: infinity comparison}, that $H_\alpha(X) - H_\infty(X) < \log \alpha^{\frac 1 {\alpha -1}}$, for $X$ log-concave, and thus that no such extremizer exists in the discrete setting.  

The proof methods in the discrete setting are distinct as well. In the continuous setting, there are analytic and convexity tools that can be leveraged, the Pr\'ekopa-Leindler inequality, integration by parts, perspective function convexity, etc. 

Though there has been significant recent interest in developing discrete versions of these continuous techniques (see \cite{Halikias,Kla-Leh,Gozlan-Rob-Sam-Te, Marsiglietti2020discretelocalization}), the authors are not aware of a version that could deliver Theorem \ref{thm: infinity comparison}.  Our proof method will show that any log-concave distribution majorizes a ``two-sided geometric distribution'' with the same $\infty$-R\'enyi entropy.  Using the Schur concavity of the R\'enyi entropy the problem is reduced to direct computation on these two-sided geometric distributions.  

First progress on this result was made by the authors in \cite{MT20:isit} by a similar argument, which only applied however to monotone log-concave variables, and was leveraged by the concavity of Shannon entropy to deliver a sub-optimal reversal, $H(X) - H_\infty(X) \leq \log 2e$ for general log-concave variables. In contrast the results here, $H_\alpha(X) - H_\infty(X) \leq \log \alpha^{\frac 1 {\alpha - 1}}$ are sharp for all $\alpha$, and they apply to general log-concave variables, without the additional assumption of monotonicity.

As an application we investigate a conjecture of Madiman and Kontoyannis \cite{MK17}, which can be regarded as an entropic analog of the Rogers-Shephard inequality from Convex Geometry.  We consider general orders of the R\'enyi entropy, where the inequality can be considered a reversal, under the assumption of log-concavity, of the R\'enyi entropy power inequalities that have attracted recent attention (see \cite{BC14,  BC15:1, bobkov2017variants,  Jiange, li2019renyi, LMM19,  MMX17:2,MM18-IEEE, marsiglietti2018renyi, RS16, rioul2018renyi}).\\

Let us outline the paper. In the next section we gather basic definitions and properties regarding log-concave sequences and majorization. In Section \ref{sec: two sided geos} we prove the desired inequalities in the case that the distribution considered is two-sided geometric.  In Section \ref{sec: Proof} we show that every log-concave distribution majorizes a two-sided geometric distribution with the same min-entropy, and use the Schur concavity of the R\'enyi entropy to reduce the problem to the result proven in Section \ref{sec: two sided geos}.  In Section \ref{sec: Varentropy conjecture} we discuss a conjecture which would imply a sharp reversal of \eqref{eq: Discrete monotonicity of Renyi entropy} for all R\'enyi entropies (not just with comparison to min-entropy), but only for monotone log-concave distributions.  This conjecture amounts to the discrete analog of a result that does hold for all log-concave densities in $\mathbb{R}^d$, however as we show, in the discrete case, the result fails without the assumption of monotonicity.  In Section \ref{sec: RS}, we establish R\'enyi versions of the Rogers-Shephard inequality in both the continuous and discrete cases.

\paragraph{Acknowledgements.} 
We would like to thank Mokshay Madiman and Arnaud Marsiglietti for fruitful discussions throughout the development of the paper.
TT's research is partially supported by NSF grant DMS-1955175.

\section{Preliminaries} \label{sec: prelims}

We will use  the following notation for integer intervals.
For $a \leq b \in \mathbb{Z}$, $\llbracket a, b \rrbracket \coloneqq \{ x \in \mathbb{Z}: a \leq x \leq b \}$, $\rrbracket a, b \llbracket \coloneqq \{x \in \mathbb{Z}: a < x < b \}$, $\llbracket a, b \llbracket \coloneqq \{x \in \mathbb{Z}: a \leq x < b \}$ and so on.  We also let $\llbracket a, \infty \llbracket$ denote $\{ x \in \mathbb{Z}: a \leq x  \}$

\begin{definition}[Log-concavity]\label{def:  log-concavity}
    A sequence $f: \mathbb{Z} \to [0,\infty)$ is log-concave when it satisfies
    \begin{align} \label{eq: simple log-concave equation}
        f^2(n) \geq f(n-1) f(n+1)
    \end{align}
    for all $n \in \mathbb{Z}$ and $a \leq b \subseteq \{ f > 0\}$ implies $\llbracket a, b \rrbracket \subseteq \{ f > 0 \}$  (in other words, the support of $f$ is a contiguous interval of integers). A $\mathbb{Z}$-valued random variable $X$ is log-concave when the sequence $p_i = \mathbb{P}(X=i)$ is log-concave.  We will denote the space of all log-concave probability densities on $\mathbb{Z}$ by $\mathcal{L}(\mathbb{Z})$.
\end{definition}

There have already been a few interesting information theoretic results regarding log-concavity in discrete settings, see for example \cite{Joh07, JKM08:maxent}, but there is a vast mathematical literature, we mention only \cite{Sta89} and recall some relevant facts.
The class of log-concave sequences is closed under convolution, and thus the log-concave random variables are closed under independent summation.  The class is also closed under weak limits.  Important examples of log-concave distributions are the Bernoulli, Binomial, Geometric, Negative Binomial, and Poisson  distribution.




Let $\ell_1(\mathbb{Z})$ denote the functions $f: \mathbb{Z} \to \mathbb{R}$ such that $\sum_{i \in \mathbb{Z}} |f(i)| < \infty$.

\begin{definition}[Decreasing rearrangment] \label{def: decreasing rearrangement}
For a function $f: \mathbb{Z} \to [0,\infty)$ in $\ell_1(\mathbb{Z})$ denote the sequence $f_i^\downarrow$ to be the decreasing rearrangment of $f$.  Explicitly, $f^\downarrow$ satisfies $f_i^\downarrow \geq f_{i+1}^\downarrow$ for all $i$ and $f_i^\downarrow = f(\tau(i))$ for a bijection $\tau: \mathbb{N} \to \mathbb{Z}$.  
\end{definition}

\begin{definition}[Majorization] \label{def: majorization}
    For $\ell_1(\mathbb{Z})$ functions $f,g: \mathbb{Z} \to [0,\infty)$, we say $f$ majorizes $g$ and write $f \succ g$ when 
    \begin{align}
        \sum_{i=1}^k f_i^\downarrow \geq \sum_{i=1}^k g_i^\downarrow,
    \end{align}
    and equality holds in the limit with $k \to \infty$.
\end{definition}

We let $\mathcal{P}(\mathbb{Z})$ denote the space of probability densities on $\mathbb{Z}$.
\begin{definition}[Schur Convexity] \label{def: Schur convexity}
    A function $\Phi : \mathcal{P}(\mathbb{Z}) \to \mathbb{R} \cup \{ \infty\}$ is Schur-convex when $ f \succ g$ implies
    \begin{align}
        \Phi(f) \geq \Phi(g).  
    \end{align}
    $\Phi$ is Schur-concave when $-\Phi$ is Schur-convex.
\end{definition}

\begin{definition}[Two-sided geometric distribution] \label{def: two-sided geometric}
A density function $\varphi$ on $\mathbb{Z}$ is a two-sided geometric distribution when there exists $p, q \in [0,1)$ and $m \in \mathbb{Z}$ such that its density function $\varphi$ can be expressed as
\begin{align}
    \varphi(n) = \frac{(1-p)(1-q)}{1-pq} f(n-m).
\end{align}
with
\begin{align}
    f(n) = \begin{cases}
                p^{n} & \hbox{ for } n\geq 0 \\
                q^{-n} & \hbox{ for } n \leq 0.
    \end{cases}
\end{align}
with the convention that $0^0 = 1$.  We will denote the set of all such densities by $\mathcal{E}(\mathbb{Z})$.
\end{definition}
For $q = 0$ and $m = 0$, $\varphi$ is the usual geometric distribution with parameter $1-p$, and when $p=q=0$ we have a point mass at $m$.  In any case, observe that all two-sided geometric distributions are log-concave, since their support is by definition contiguous, $\varphi^2(n) = \varphi(n+1) \varphi(n-1)$ for $n \neq m$ and $\varphi^2(m) = \max_n \varphi^2(n) \geq \varphi(m-1) \varphi(m+1)$.

\section{Two-sided geometric distributions} \label{sec: two sided geos}
In this section we will derive bounds on the R\'enyi entropy on the two-sided geometric distribution.

\begin{lemma} \label{lem: two sided geo renyi bound}
    If $\varphi$ is a density function on $\Z$ with a two sided geometric distribution with parameters $p$ and $q$, then for $\alpha \in (0,\infty)$, we have
    \begin{align}
        H_\alpha(\varphi) - H_\infty(\varphi) < \frac{\log \alpha}{\alpha - 1}.
    \end{align}
    with $\frac{\log \alpha }{\alpha - 1} \big|_{\alpha = 1} \coloneqq \log e$.
\end{lemma}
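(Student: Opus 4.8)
The plan is to reduce everything to an explicit computation with the two-parameter family. Write $\varphi(n) = c\, f(n-m)$ with $c = \tfrac{(1-p)(1-q)}{1-pq}$ and $f$ as in Definition \ref{def: two-sided geometric}; translation invariance lets me take $m=0$. Since $\|\varphi\|_\infty = \varphi(0) = c$, we have $H_\infty(\varphi) = -\log c$. For the $\alpha$-entropy I would compute $\sum_n \varphi(n)^\alpha = c^\alpha \sum_n f(n)^\alpha = c^\alpha\bigl(\sum_{n\ge 0} p^{\alpha n} + \sum_{n\ge 1} q^{\alpha n}\bigr) = c^\alpha\bigl(\tfrac{1}{1-p^\alpha} + \tfrac{q^\alpha}{1-q^\alpha}\bigr) = c^\alpha \cdot \tfrac{1 - p^\alpha q^\alpha}{(1-p^\alpha)(1-q^\alpha)}$. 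Hence
\begin{align}
H_\alpha(\varphi) - H_\infty(\varphi) &= \frac{1}{1-\alpha}\log\!\left(c^\alpha\,\frac{1-(pq)^\alpha}{(1-p^\alpha)(1-q^\alpha)}\right) + \log c \nonumber \\
&= \frac{1}{1-\alpha}\log\!\left(\frac{1-(pq)^\alpha}{(1-p^\alpha)(1-q^\alpha)}\cdot\frac{(1-p)^{\alpha-1}(1-q)^{\alpha-1}}{(1-pq)^{\alpha-1}}\right). \nonumber
\end{align}
So the claim becomes: this last expression is strictly less than $\tfrac{\log\alpha}{\alpha-1}$ for all $p,q\in[0,1)$ and all $\alpha\in(0,\infty)$, with the $\alpha=1$ value interpreted as the limit $\log e$.

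Next I would separate the two "geometric coordinates." The natural move is to check whether the function $g(p) = \tfrac{1}{1-\alpha}\log\!\bigl(\tfrac{(1-p)^{\alpha-1}}{1-p^\alpha}\bigr)$ (plus the coupling term through $pq$) behaves monotonically, so that the supremum over $p$ and $q$ is attained at the extreme points $p\to 1$, $q\to 1$. Indeed, taking $p\to 1^-$ and $q\to 1^-$ one expects the expression to converge precisely to $\tfrac{\log\alpha}{\alpha-1}$: writing $1-p^\alpha \approx \alpha(1-p)$ and $1-(pq)^\alpha\approx \alpha(1-pq)$ as $p,q\to1$, the ratio inside the log tends to $\tfrac{\alpha(1-pq)}{\alpha(1-p)\alpha(1-q)}\cdot\tfrac{(1-p)^{\alpha-1}(1-q)^{\alpha-1}}{(1-pq)^{\alpha-1}}$, and after a short asymptotic analysis (using $1-pq \sim (1-p)+(1-q)$) one should get the bound $\alpha^{1/(\alpha-1)}$ in the limit, matching \eqref{eq: infinity renyi and alpha}. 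This identifies the right-hand constant as the (non-attained) supremum, consistent with the paper's claim that no discrete extremizer exists.

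I expect the main obstacle to be proving the monotonicity/extremality cleanly — i.e. showing that the displayed function of $(p,q)$ is strictly increasing in each variable (or at least that its supremum is the boundary limit) for every $\alpha$, handling the sign flip of $\tfrac{1}{1-\alpha}$ across $\alpha=1$ and the degenerate cases $q=0$ (ordinary geometric) and $p=q=0$ (point mass, where the left side is $0<\log e$) separately. A robust route: fix $q$ and differentiate in $p$, reducing to an elementary inequality about $\tfrac{d}{dp}\log\tfrac{(1-p)^{\alpha-1}(1-pq)}{(1-p^\alpha)(1-pq)^{\alpha}}$-type expressions; alternatively, substitute $p = e^{-s}$, $q = e^{-t}$ with $s,t>0$ and show the resulting function of $(s,t)$ is monotone decreasing toward its $s,t\to 0^+$ limit, which tends to be cleaner because $\tfrac{1-e^{-\alpha s}}{1-e^{-s}}$ is a manifestly monotone function of $s$. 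Either way the strictness is automatic since equality with the constant only occurs in the unattained limit. I would also double-check the $\alpha\to\infty$ regime, where $\tfrac{\log\alpha}{\alpha-1}\to 0$ while $H_\alpha - H_\infty \to 0$ as well, to make sure the strict inequality persists there.
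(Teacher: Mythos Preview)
Your setup is right, but the second displayed line has an algebra slip: combining $\frac{1}{1-\alpha}\log(c^\alpha A)+\log c$ gives $\frac{1}{1-\alpha}\log(cA)$, not $\frac{1}{1-\alpha}\log(c^{\alpha-1}A)$; the extra factor inside the logarithm should be $c=\frac{(1-p)(1-q)}{1-pq}$ with exponent~$1$. With this correction the expression agrees with the paper's form
\[
H_\alpha(\varphi)-H_\infty(\varphi)=\frac{1}{1-\alpha}\log\frac{\frac{1}{1-p^\alpha}+\frac{1}{1-q^\alpha}-1}{\frac{1}{1-p}+\frac{1}{1-q}-1}.
\]
(With the wrong exponent your asymptotic check as $p,q\to 1$ would in fact blow up rather than converge to $\alpha^{1/(\alpha-1)}$, so the error is not harmless.)

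Your proposed route --- show the quantity is monotone in $p$ and $q$ and push to the boundary --- is viable in principle but is not the paper's approach, and the coupling through $pq$ that you flag as the main obstacle is exactly what makes it heavier. The paper instead fixes $p,q$ and varies $\alpha$: setting
\[
F(\alpha)=\alpha\left(\frac{1}{1-p^\alpha}+\frac{1}{1-q^\alpha}-1\right),
\]
the desired inequality for both $\alpha>1$ and $\alpha<1$ amounts to $F(\alpha)>F(1)$ on the appropriate side, so it suffices that $F$ be strictly increasing. Computing $F'(\alpha)$ and substituting $x=p^\alpha$, $y=q^\alpha$ reduces everything to the single-variable fact that $f(x)=\frac{x\log x}{(1-x)^2}+\frac{1}{1-x}>\frac{1}{2}$ on $(0,1)$, proved by showing $f$ is strictly decreasing with $f(1^-)=\frac{1}{2}$. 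This completely decouples $p$ and $q$ and handles the sign flip across $\alpha=1$ in one stroke, whereas your plan would have to control the mixed term $\frac{1-(pq)^\alpha}{1-pq}$ directly and argue separately for $\alpha\lessgtr 1$. The $\alpha=1$ case in the paper is then a one-line consequence of the same inequality $f(x)+f(y)>1$.
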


\begin{proof}
First we proceed with the case $\alpha \neq 1$.
By direct computation,
    \begin{align}
    H_\alpha(\varphi) - H_\infty(\varphi)
        &=
            (1-\alpha)^{-1}\log \left( \sum_n \varphi^\alpha (n) \right) + \log \| \varphi \|_\infty
                \\
        &=
            (1-\alpha)^{-1} \left( \log \| \varphi \|_\infty^\alpha + \log \left( \frac 1 {1-p^\alpha} + \frac 1 {1-q^\alpha} - 1 \right)  \right) + \log \| \varphi \|_\infty
                \\
        &=
            \frac{ \log \| \varphi \|_\infty + \log \left( \frac 1 {1-p^\alpha} + \frac 1 {1-q^\alpha} - 1 \right) }{1-\alpha}
                \\
        &=
            \frac{\log \left( \frac{\frac 1 {1-p^\alpha} + \frac 1 {1-q^\alpha} - 1}{\frac 1 {1-p} + \frac 1 {1-q} - 1} \right)}{ 1- \alpha}.
\end{align}
Thus, it suffices to prove for $\alpha > 1$
    \begin{align}
       \frac 1 {1-p} + \frac 1 {1-q} - 1  <  \alpha \left(\frac 1 {1-p^\alpha} + \frac 1 {1-q^\alpha} - 1 \right)
    \end{align}
    and 
    \begin{align}
       \frac 1 {1-p} + \frac 1 {1-q} - 1  >  \alpha \left(\frac 1 {1-p^\alpha} + \frac 1 {1-q^\alpha} - 1 \right)
    \end{align}
    when $\alpha \in (0,1)$.  Note that we have equality when $\alpha = 1$, so it suffices to show the function,
    \begin{align}
        F(\alpha) = \alpha \left(\frac 1 {1-p^\alpha} + \frac 1 {1-q^\alpha} - 1 \right)
    \end{align}
    is strictly increasing.  Computing directly,
    \begin{align}
        F'(\alpha) = \frac{p^\alpha \log p^\alpha}{(1-p^\alpha)^2} + \frac{q^\alpha \log q^\alpha}{(1-q^\alpha)^2} + \frac{ 1 }{1 - p^\alpha} + \frac 1 {1 - q^\alpha} - 1.
    \end{align}
Writing $x = p^\alpha$ and $y = q^\alpha$ it is enough to prove
\begin{equation}\label{eq:ineq-xy}
    \frac{x \log x}{(1-x)^2} + \frac{y \log y}{(1-y)^2} + \frac{ 1 }{1 - x} + \frac 1 {1 - y} > 1
\end{equation}
for $x,y \in (0,1)$. To this end, we consider $f(x) = \frac{x\log x}{(1-x)^2} + \frac{1}{1-x}$, $x \in (0,1)$ and argue that $f(x) > \frac{1}{2}$ for $x \in (0,1)$. Note that 
$
(1-x)^3f'(x) = 2(1-x)+(1+x)\log x
$
and the right hand side, call it $g(x)$, is an increasing function of $x$ on $(0,1)$ because
$
g'(x) = \log x + \frac{1}{x}-1 = -\log\frac{1}{x} + \frac{1}{x} - 1 > 0.
$
Thus $g(x) < g(1) = 0$ for $x \in (0,1)$ which shows that $f'(x) < 0$ for $x \in (0,1)$. Thus $f$ is strictly decreasing and we get $f(x) > f(1-) = \frac{1}{2}$ for $x \in (0,1)$.

When $\alpha = 1$, a direct computation gives 
\[
H(\phi) - H_\infty(\phi) = -\frac{\frac{p\log p}{(1-p)^2} + \frac{q\log q}{(1-q)^2}}{\frac{1}{1-p}+ \frac{1}{1-q} - 1}
\]
which is strictly less than $1$ by \eqref{eq:ineq-xy}.
\end{proof}

\section{Proof of Theorem \ref{thm: infinity comparison}} \label{sec: Proof}

\begin{lemma}  \label{lem: majorization}
For a non-negative monotone log-concave $\ell_1(\Z)$ function $f$, supported on $\rrbracket n, k \llbracket$, there exists a unique log-affine positive function $g$ with infinite set of support containing $\rrbracket n, k \llbracket$, the same mode, maximum value, and $\ell_1(\Z)$ norm such that $ f \succ g$.
\end{lemma}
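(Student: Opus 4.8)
The plan is to construct $g$ explicitly and then verify $f \succ g$ by a single-crossing argument. By the reflection $m \mapsto -m$, which preserves log-concavity, the $\ell_1$-norm, the maximum value, and the decreasing rearrangement (hence majorization in both directions) while turning a non-decreasing sequence into a non-increasing one, I may assume $f$ is non-increasing on $\rrbracket n, k \llbracket$; then its mode is $n+1$ and $\|f\|_\infty = f(n+1)$. Set $\rho := 1 - f(n+1)/\|f\|_1 \in [0,1)$ and define $g(m) := f(n+1)\,\rho^{\,m-n-1}$ for $m \ge n+1$ and $g(m) := 0$ for $m \le n$. This $g$ is log-affine and positive on the half-line $\llbracket n+1, \infty \llbracket \supseteq \rrbracket n, k \llbracket$, has mode $n+1$, maximum $f(n+1)$, and $\|g\|_1 = f(n+1)/(1-\rho) = \|f\|_1$. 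For uniqueness I would note that a log-affine positive $\ell_1$ sequence with infinite support is a geometric progression on a half-line; matching the mode $n+1$ forces that half-line to be $\llbracket n+1, \infty \llbracket$ (the increasing / left-half-line alternative being incompatible with containing $\rrbracket n, k \llbracket$ unless $f$ is a point mass), matching the maximum fixes the multiplicative constant as $f(n+1)$, and matching the $\ell_1$-norm fixes the ratio as $\rho$. (If $\rrbracket n, k \llbracket$ is a single point then $f$ is a point mass and the statement is vacuous, so we may assume $k \ge n+3$, whence $\rho \in (0,1)$.)

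The core is $f \succ g$. Since $f$ and $g$ are both non-increasing and vanish below $n+1$, their decreasing rearrangements are $f_i^\downarrow = f(n+i)$ and $g_i^\downarrow = g(n+i)$, so it suffices to show $D_K := \sum_{i=1}^{K}\bigl(f(n+i)-g(n+i)\bigr) \ge 0$ for every $K \ge 1$; the limiting equality $D_\infty = \|f\|_1 - \|g\|_1 = 0$ holds by construction. I will deduce this from the single-crossing claim: there is $j$ (possibly $j=n$ or $j=\infty$) such that $f(m) - g(m) \ge 0$ for $m \le j$ and $f(m) - g(m) \le 0$ for $m > j$. Granting it, $D_K$ is non-decreasing for $K \le j-n$ and non-increasing for $K \ge j-n$, so $D_K \ge \min\{D_1, D_\infty\} = 0$ because $D_1 = f(n+1) - g(n+1) = 0$.

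It remains to prove single crossing. Here I would compare $\phi := \log f$, which is concave on $\rrbracket n, k \llbracket$ by log-concavity of $f$, with the affine sequence $\psi := \log g$ of slope $\log \rho$; the two agree at $n+1$, so $\delta := \phi - \psi$ is concave on $\rrbracket n, k \llbracket$ with $\delta(n+1) = 0$. A concave sequence has non-increasing increments, hence is unimodal, so starting from the value $0$ at the left endpoint it is first $\ge 0$ (on its non-decreasing stretch) and then $\le 0$; and for $m \ge k$ we have $f(m) - g(m) = -g(m) < 0$. This yields the required $j$ (with the conventions $j = n$ when $f \le g$ throughout and $j = \infty$ when $f \ge g$ throughout, both of which moreover force $f = g$ in view of $\|f\|_1 = \|g\|_1$).

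I expect the only real obstacle to be the single-crossing step, but as sketched it collapses to the elementary fact that a concave sequence vanishing at its left endpoint is unimodal and changes sign at most once, from $+$ to $-$. Everything else — the reflection reduction, the explicit form and uniqueness of $g$, and the passage from single crossing plus equality of $\ell_1$-norms to majorization via the monotonicity of the partial sums $D_K$ — is routine bookkeeping.
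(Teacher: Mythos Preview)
Your proposal is correct and follows essentially the same route as the paper: reduce by reflection to the non-increasing case, build a geometric sequence on the half-line with the same maximum and $\ell_1$-norm, and verify $f\succ g$ via a single-crossing argument coming from the concavity of $\log f - \log g$. The only cosmetic difference is that you compute the ratio $\rho$ explicitly whereas the paper invokes the intermediate value theorem to produce it; your treatment of uniqueness and of the passage from single crossing to majorization is in fact more detailed than what the paper records.
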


\begin{proof}
Without loss of generality, let us assume that $f$ is non-increasing and supported on $\llbracket 0, k \llbracket$.  Define for $q \in [0,1)$ a new function $g_q$ to be log-affine supported on $\llbracket 0 , \infty \rrbracket$ and such that $g_q(1) = f(1)$ and $g(j + 1) = q g(j)$.  Note that $\sum_j g_0(j) \leq \sum_j f(j) < \lim_{q \to 1} \sum_j g_q(j) = \infty$ and the function $q \mapsto \sum_j g_q(j)$ is strictly increasing in $q$.  Thus, by the intermediate value theorem, there exists a unique $q^* \in [0,1]$ such that $\sum_j g_{q^*}(j)= \sum_j f(j)$.  Take $g = g_{q^*}$, and observe that $g$ is log-affine and satisfies $\|g\|_\infty = \|f\|_\infty$ by construction.  Since $g$ is log-affine and $f$ is log-concave, for the function $f-g$ there exists and $l \in \llbracket 1, n \rrbracket$ such that $f(k)-g(k) \geq 0$ on $\llbracket 1 , l \rrbracket$ and $f(k)-g(k) \leq 0$.
\end{proof}

Using the notation $\mathcal{L}_m(\mathbb{Z}) = \{ f \in \mathcal{L}(\mathbb{Z}): H_\infty(f) = m\}$ and $\mathcal{E}_m(\mathbb{Z}) = \{ f \in \mathcal{E}(\mathbb{Z}): H_\infty(f) = m\}$, we will prove the following general result, that roughly states minimizers of Schur-convex functions among log-concave densities satisfying a constraint on their maximum value, are two-sided geometric distributions.

\begin{theorem} \label{thm: Schur extremizers}
    For a Schur convex function, $\Phi: \mathcal{P}(\mathbb{Z}) \to \mathbb{R}$, and $f \in \mathcal{L}_m(\mathbb{Z})$ there exists $\varphi \in \mathcal{E}_m(\mathbb{Z})$ such that 
    \begin{align}
        \Phi(f) \geq \Phi(\varphi).
    \end{align}
\end{theorem}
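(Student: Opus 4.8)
The plan is to reduce the statement to two facts already established: Lemma~\ref{lem: majorization}, together with the elementary observation that for a unimodal sequence the $k$ largest values occupy an interval about the mode. Concretely I will show that every $f\in\mathcal{L}_m(\mathbb{Z})$ majorizes some $\varphi\in\mathcal{E}_m(\mathbb{Z})$; since $\Phi$ is Schur convex, $f\succ\varphi$ then forces $\Phi(f)\ge\Phi(\varphi)$, which is the conclusion.

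To build $\varphi$, set $M\coloneqq\|f\|_\infty=e^{-m}$ and let $n_0$ be a mode of $f$ (say the least integer with $f(n_0)=M$). Split $f$ at $n_0$ into its monotone halves $f_R(i)\coloneqq f(n_0+i)$ and $f_L(i)\coloneqq f(n_0-i)$, $i\ge 0$; each is a non-increasing log-concave $\ell_1(\mathbb{Z})$ function with largest value $M$. Applying Lemma~\ref{lem: majorization} to each half (if a half is a point mass at $n_0$, leave it unchanged with the corresponding parameter equal to $0$) yields log-affine $g_R(i)=Mp^i$, $g_L(i)=Mq^i$ with $\|g_R\|_1=\|f_R\|_1$, $\|g_L\|_1=\|f_L\|_1$, and $f_R\succ g_R$, $f_L\succ g_L$; here $p=1-M/\|f_R\|_1$ and $q=1-M/\|f_L\|_1$ lie in $[0,1)$. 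Gluing, define $\varphi(n_0+i)=Mp^i$ for $i\ge 0$ and $\varphi(n_0-i)=Mq^i$ for $i\ge 1$. Then $\varphi$ is a two-sided geometric density with mode $n_0$ and $\|\varphi\|_\infty=M$, while $\sum_n\varphi(n)=\|g_R\|_1+\|g_L\|_1-M=\|f_R\|_1+\|f_L\|_1-M=\sum_n f(n)=1$, so $\varphi\in\mathcal{E}_m(\mathbb{Z})$.

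Next I would verify $f\succ\varphi$. As both are probability densities, by Definition~\ref{def: majorization} it suffices to show $\sum_{i=1}^k f_i^\downarrow\ge\sum_{i=1}^k\varphi_i^\downarrow$ for all $k$ (equality in the limit being automatic since both sums equal $1$). Because $\varphi$ is unimodal, its $k$ largest values are attained on an interval $J_k=\llbracket n_0-a,n_0+b\rrbracket$ with $a+b+1=k$; splitting at $n_0$ and using that $\varphi$ equals $g_R$, resp. $g_L$, on the two half-lines,
\begin{align}
\sum_{i=1}^k\varphi_i^\downarrow=\sum_{i=0}^b g_R(i)+\sum_{i=0}^a g_L(i)-M\le\sum_{i=0}^b f_R(i)+\sum_{i=0}^a f_L(i)-M=\sum_{j\in J_k}f(j)\le\sum_{i=1}^k f_i^\downarrow,
\end{align}
where the middle inequality is the pair of one-sided majorizations $f_R\succ g_R$, $f_L\succ g_L$ (all four sequences are non-increasing, hence equal to their rearrangements), and the last holds because $\sum_{i=1}^k f_i^\downarrow=\max\{\sum_{j\in S}f(j):|S|=k\}$. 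Thus $f\succ\varphi$, and Schur convexity gives $\Phi(f)\ge\Phi(\varphi)$.

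The main obstacle is the gluing step: one must be certain that two \emph{separate} one-sided majorizations assemble into a genuine two-sided majorization $f\succ\varphi$. This is exactly where unimodality of the glued density $\varphi$ is essential — it lets each partial sum $\sum_{i=1}^k\varphi_i^\downarrow$ be realized as a sum over an interval straddling the mode, which then decomposes cleanly along the two half-lines. The only other care needed is bookkeeping in degenerate configurations (half-line support, point masses, $p=0$ or $q=0$), where Lemma~\ref{lem: majorization} is applied or bypassed as appropriate.
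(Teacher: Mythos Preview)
Your proposal is correct and follows essentially the same route as the paper: split $f$ at its mode into two monotone halves, apply Lemma~\ref{lem: majorization} to each to obtain log-affine replacements with the same maximum and $\ell_1$-mass, glue them into a two-sided geometric $\varphi$, and then verify $f\succ\varphi$ by using unimodality of $\varphi$ to write $\sum_{i=1}^k\varphi_i^\downarrow$ as a sum over an interval straddling the mode, decomposing it along the two half-lines, and bounding each piece by the corresponding partial sum of $f$. Your write-up is in fact somewhat more explicit than the paper's (you spell out why the final inequality $\sum_{j\in J_k}f(j)\le\sum_{i=1}^k f_i^\downarrow$ holds and you flag the degenerate point-mass case), but the argument is the same.
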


\begin{proof}
    We will prove the result by showing that given a log-concave density $f$ there exists a two-sided geometric distribution $\varphi$ with the same maximum such that $f \succ \varphi$.  Without loss of generality, suppose that $f(0) = \|f\|_\infty$ and $t \coloneqq \sum_{j=0}^\infty f(j) \geq \sum_{j= -\infty}^{0} f(j)$, note $t \geq \frac 1 2$.  
Define $f_+(j) = \mathbbm{1}_{[0,\infty)}(j) f(j)$ and $f_-(j) = \mathbbm{1}_{(-\infty,0]}(j) f(j)$, define $a_+$ to be the log-affine function of the same maximum and majorized by $f_+$, while $a_-$ to be the log-affine function of the same maximum and majorized by $f_-$, as supplied by Lemma \ref{lem: majorization}. Note that $a_+(j) =  f(0) p^j$ for $j \geq 0$ and $p \in [0,1)$ while $a_-(k) =  f(0) q^{-j}$ for $j \leq 0$ for $q \in [0,1)$.  Note that by our assumptions $\sum_{j=0}^\infty a_+(j) = \sum_{j=0}^\infty f(j) \geq \sum_{j= -\infty}^{0} f(j) = \sum_{j=-\infty}^0 a_-(j)$ implies $p \geq q$.  Define
\begin{align}
    \varphi(j) = \begin{cases}
                a_+(j),& j > 0 
            \\
            f(0), &j =0
                \\
            a_-(j), & j < 0.
            \end{cases}
\end{align}
By construction $\sum_i \varphi_i = \sum_i f(i)$. In particular, $\varphi$ is a density function on $\Z$. Observe that $\varphi$ is the two-sided geometric distribution majorized by $f$ as desired, since for every $n$ 
\begin{align}
    \sum_{i=1}^{n} \varphi^\downarrow_i 
        &=
            f(0) + \sum_{i=1}^{j} a_+(i) + \sum_{i=1}^{n-j-1} a_-(i)
                \\
        &\leq
            f(0) + \sum_{i=1}^{j} f_+(i) + \sum_{i=1}^{n-j-1} f_-(i)
                \\
        &\leq 
            \sum_{i=1}^n f^\downarrow_{i}
\end{align}
Thus $ \varphi  \prec f$ and by Schur-convexity
 $\Phi(\varphi) \leq \Phi(f)$, and thus the proof is complete.
\end{proof}

\begin{proof}[Proof of Theorem \ref{thm: infinity comparison}]
As is well known, see \cite{Marshall-Olkin, ho2015convexity} the R\'enyi entropy is Schur-concave.  Given $f \in \mathcal{L}_m(\mathbb{Z})$, by Theorem \ref{thm: Schur extremizers} there exists $\varphi \in \mathcal{E}_m(\mathbb{Z})$ such that
\begin{align}
    H_\alpha(f) 
        &\leq 
            H_\alpha(\varphi)
                \\
        &<
            H_\infty(\varphi) + \frac{\log \alpha}{\alpha - 1} \label{eq: inequa here}
                \\
        &=
            H_\infty(f) + \frac{\log \alpha}{\alpha - 1},
\end{align}
where \eqref{eq: inequa here} follows from Lemma \ref{lem: two sided geo renyi bound}.  This gives the strict inequality of the theorem. To show that it is attained in the limit of geometric distribution is an easy and direct computation.
\end{proof}

\section{Varentropy and logconcavity of auxiliary function}\label{sec: Varentropy conjecture}
In analogy to the continuous case, we pose the following conjecture.

\begin{conj}\label{conj:sum}
Let $(x_n)_{n=1}^N$ be a finite monotone log-concave sequence. Then the function
\[
F(t) = \log\left[t\sum_{n=1}^N x_n^t\right]
\]
is concave on $(0,+\infty)$. 
\end{conj}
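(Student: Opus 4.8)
The plan is to establish concavity of $F(t) = \log\bigl[t\sum_{n=1}^N x_n^t\bigr]$ by showing $F''(t) \leq 0$ pointwise on $(0,+\infty)$. Write $S(t) = \sum_{n=1}^N x_n^t$ and $L(t) = \sum_{n=1}^N x_n^t \log x_n = S'(t)$, so that $F(t) = \log t + \log S(t)$. Then $F'(t) = \tfrac{1}{t} + \tfrac{L(t)}{S(t)}$ and
\[
F''(t) = -\frac{1}{t^2} + \frac{S(t)S''(t) - L(t)^2}{S(t)^2},
\]
where $S''(t) = \sum_{n=1}^N x_n^t (\log x_n)^2$. Introducing the probability weights $\pi_n(t) = x_n^t / S(t)$ and the random variable $W$ taking value $\log x_n$ with probability $\pi_n(t)$, the fraction $\frac{S S'' - L^2}{S^2}$ is exactly $\Var(W) = \mathbb{E}[W^2] - (\mathbb{E} W)^2$. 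So the conjecture reduces to the clean inequality
\[
\Var(W) \;\leq\; \frac{1}{t^2} \qquad \text{for all } t > 0,
\]
where $W = \log X$ and $X$ has the tilted distribution $\mathbb{P}(X = x_n) \propto x_n^t$ on the log-concave sequence $(x_n)$. Since a power-tilt of a log-concave sequence is again log-concave (the defining inequality $x_n^2 \geq x_{n-1}x_{n+1}$ is preserved under $x_n \mapsto x_n^t \cdot c$), it suffices to prove: for any monotone log-concave probability sequence $(p_n)_{n=1}^N$, the quantity $\Var(\log n')$ — no wait, more precisely $\Var(\log x_n)$ where $p_n \propto x_n$ — is at most $1$. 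Equivalently, normalizing by absorbing $t$, we want $\Var(\log Y) \leq 1$ whenever $Y$ is a monotone log-concave random variable on a geometric-type scale; the cleanest formulation is: \emph{if $(p_n)$ is monotone log-concave on $\llbracket 1, N\rrbracket$ then $\sum p_n (\log \tfrac{p_n}{\|p\|_?})\ldots$}. Rather than chase the normalization, I would phrase the target directly as: for $W$ as above, $t^2\,\Var(W) \leq 1$, and observe this is scale-invariant in the $x_n$ so we may rescale $t=1$.

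The key step is therefore the variance bound, and this is where I expect the main obstacle to lie. The natural approach is to exploit monotone log-concavity of the tilted sequence $\pi_n$: after reindexing so $\pi_n$ is non-increasing, the increments $\delta_n := \log \pi_n - \log\pi_{n+1} \geq 0$ are non-decreasing in $n$ (log-concavity), and $W$ takes values $\log x_n = \log \pi_n + \log S$, so $\Var(W) = \Var(\log \pi_n)$ under the $\pi$-distribution. One then wants to compare this variance to that of the extremal case — a one-sided geometric, where $\log \pi_n$ is an affine function of $n$ and the distribution of $n$ itself is geometric, giving $\Var(\log \pi_n) = (\log q)^2 \cdot \tfrac{q}{(1-q)^2}$, which can be checked to be $\leq 1$ (indeed $\sup_{q\in(0,1)} \tfrac{q(\log q)^2}{(1-q)^2} = 1$, attained as $q\to 1$; this is precisely the computation $f(x) > 1/2$ type estimate appearing in Lemma \ref{lem: two sided geo renyi bound}). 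To make the comparison rigorous I would use a majorization / rearrangement argument: among monotone log-concave $\pi$ with a fixed maximum, the geometric maximizes the spread of $\log \pi_n$, analogously to Lemma \ref{lem: majorization} and Theorem \ref{thm: Schur extremizers}. Concretely, couple $W$ with the affine-in-$n$ surrogate $\widetilde W_n = \log\pi_1 - (n-1)\bar\delta$ for a suitable slope $\bar\delta$; because the true increments $\delta_n$ are non-decreasing while averaging to something controlled, a Chebyshev-type (correlation) inequality bounds $\Var(W)$ by $\Var(\widetilde W)$, reducing to the geometric computation.

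An alternative, possibly cleaner route avoids rearrangement: directly estimate $\Var(\log X)$ for log-concave $X$ on $\mathbb{Z}$ via a Poincaré-type inequality for log-concave measures on the integers. Writing $\varphi(n) = \log \pi_n$ (concave, hence $\varphi(n+1) - \varphi(n)$ non-increasing), one has $|\varphi(n+1)-\varphi(n)| \leq |\varphi'_{\mathrm{left}}|$ near the mode, and a discrete Brascamp–Lieb / Poincaré inequality of the form $\Var_\pi(\psi) \leq \mathbb{E}_\pi\bigl[(\psi(n{+}1)-\psi(n))^2 \cdot w(n)\bigr]$ for an appropriate weight would, applied to $\psi = \varphi$, give $\Var(\log\pi) \leq \mathbb{E}_\pi[(\Delta\varphi)^2 w]$, and then one bounds the right side by $1$ using that the step sizes and the tail decay are coupled through log-concavity. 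The main obstacle in either approach is the same: controlling the variance when the sequence has a long, nearly-flat log-concave tail, since that is the regime making $\Var(\log\pi_n)$ largest; the monotonicity hypothesis is exactly what is needed to rule out a symmetric two-sided spread that would double the variance, which is consistent with the paper's later remark that the conjecture fails without monotonicity.
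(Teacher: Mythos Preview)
The statement you are attempting to prove is presented in the paper as an \emph{open conjecture}; the paper does not give a proof. What the paper does do is exactly your first reduction: computing $F''$ shows the conjecture is equivalent to the varentropy bound
\[
\frac{\left(\sum y_n\log^2 y_n\right)\left(\sum y_n\right) - \left(\sum y_n \log y_n\right)^2}{\left(\sum y_n\right)^2} \leq 1
\]
for every finite monotone log-concave sequence $(y_n)$, i.e.\ $\Var_p(\log p) \leq 1$ in your notation. The paper then observes that monotonicity is essential (giving an explicit counterexample without it), mentions that the case $N=3$ has been verified elsewhere, and proposes a strengthening (Conjecture~\ref{conj:sum-strong}), but leaves the conjecture itself open.

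Your attempts to go beyond this reduction have genuine gaps. In the majorization route you invoke an analogue of Lemma~\ref{lem: majorization} and Theorem~\ref{thm: Schur extremizers} to pass to the geometric extremal. But those results are usable in the paper only because the R\'enyi entropy is Schur-concave; the varentropy functional $p \mapsto \Var_p(\log p)$ is not Schur-monotone in either direction, so knowing that a log-concave $p$ majorizes a geometric with the same maximum does not, by itself, compare their varentropies. Your assertion that ``the geometric maximizes the spread of $\log \pi_n$'' and the Chebyshev-type coupling you sketch are heuristics, not arguments, and this is precisely where the difficulty of the conjecture resides. The alternative Poincar\'e/Brascamp--Lieb route is likewise undeveloped: you would need a specific discrete inequality with a weight $w(n)$ for which $\mathbb{E}_\pi[(\Delta\varphi)^2\, w] \leq 1$ can actually be verified, and none is cited or proved. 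Your check of the one-sided geometric case, $\sup_{q\in(0,1)} q(\log q)^2/(1-q)^2 = 1$, is correct and consistent with the extremal role geometrics play elsewhere in the paper, but reducing the general monotone log-concave case to this computation is exactly the open problem.
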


By looking at the second derivative, this conjecture is equivalent to the statement that for every finite monotone log-concave sequence $(y_n)_{n=1}^N$ ($y_n = x_n^t$), we have
\[
\frac{\left(\sum y_n\log^2y_n\right)\left(\sum y_n\right)-\left(\sum y_n\log y_n\right)^2}{\left(\sum y_n\right)^2} \leq 1
\]
(the left-hand side can be seen as the varentropy -- see \cite{FMW16}).

Note that Conjecture \ref{conj:sum} implies the desired sharp comparison between R\'enyi entropies of arbitrary two orders for monotone log-concave random variables. Indeed, suppose $X$ is such a random variable. By an approximation argument, we can assume that the support of $X$ is finite, say it is $\{1,\cdots,N\}$ and $p_n = \mathbb{P}(X = n) > 0$, $n=1,\cdots,N$, is log-concave. We have
\begin{equation}\label{eq:FH}
H_\alpha(X) = \frac{1}{1-\alpha}\log\left[\sum_{n=1}^N p_n^\alpha\right] = \log\left(\alpha^{\frac{1}{\alpha-1}}\right) + F(\alpha),
\end{equation}
where $F(\alpha)$ is the function from Conjecture \ref{conj:sum} for the sequence $(p_n)$. Suppose $1 < \alpha < \beta$ and write $\alpha = 1-\lambda + \lambda \beta$ with $\lambda = \frac{\alpha-1}{\beta-1} \in (0,1)$. If $F$ was concave, then we would have
\[
F(\alpha) = F((1-\lambda)\cdot 1 + \lambda \beta) \geq (1-\lambda)F(1) + \lambda F(\beta) = \lambda F(\beta) 
\]
which, by \eqref{eq:FH}, becomes
\begin{equation}\label{eq:H-opt}
H_\alpha(X) - H_\beta(X) \leq \log\left(\frac{\alpha^{\frac{1}{\alpha-1}}}{\beta^{\frac{1}{\beta-1}}}\right).
\end{equation}
Proceeding in a similar way, this would also follow for $\alpha < \beta < 1$ as well as $\alpha < 1 < \beta$.

As opposed to the continuous case, Conjecture \ref{conj:sum} cannot hold without the monotonicity assumption: for example, for the sequence $p = (\frac{1}{4},\frac{1}{2},1,\frac{1}{2},\frac{1}{4})$ the function $F$ is not concave because we have $F''(3) > 0.0009$. This is a  consequence of the fact that for a symmetric geometric random variable $X$ with $\mathbb{P}(X = k) = p^{|k|}$, $k \in \Z$, and $\alpha < \beta$, the opposite inequality to \eqref{eq:H-opt}  holds.

In view of what is true in the continuous case \cite{cohn1969some} we propose the following strengthening of Conjecture \ref{conj:sum}.

\begin{conj}\label{conj:sum-strong}
Let $(y_n)_{n=1}^N$ be a finite positive monotone and concave sequence, that is $y_n \geq \frac{y_{n-1}+y_{n+1}}{2}$, $1 < n < N$. Then for every $\gamma > 0$, the function
\[
K(t) = (t+\gamma)\sum_{n=1}^N y_n^{t/\gamma}
\]
is log-concave, that is $\log K(t)$ is concave on $(-\gamma,+\infty)$. 
\end{conj}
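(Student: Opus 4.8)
}
The plan is to reduce the asserted log-concavity to a ``varentropy'' estimate and then to obtain that estimate from its continuous counterpart, the one-dimensional instance of Cohn's inequality \cite{cohn1969some} (itself a relative of Berwald's inequality). Since $\log K(t) = \log(t+\gamma) + \log\sum_{n=1}^N y_n^{t/\gamma}$ and the reparametrization $s = t/\gamma$ is linear, the parameter $\gamma$ plays no role and it suffices to treat $\gamma = 1$. Writing $M_k(s) = \sum_{n=1}^N y_n^{s}\log^{k} y_n$, a direct computation of the second derivative gives, for $\gamma = 1$,
\[
(\log K)''(t) = \frac{M_0 M_2 - M_1^2}{M_0^2} - \frac{1}{(t+1)^2},
\]
so Conjecture \ref{conj:sum-strong} is equivalent to the bound
\[
\mathrm{Var}_{\mu_s}(\log Y) := \frac{M_0(s)\,M_2(s) - M_1(s)^2}{M_0(s)^2} \;\le\; \frac{1}{(s+1)^2},\qquad s \in (-1,\infty),
\]
where $\mu_s$ is the probability measure on $\{1,\dots,N\}$ with $\mu_s(n) \propto y_n^{s}$ and $Y \sim \mu_s$. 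Since $M_0 M_2 - M_1^2 = \tfrac12\sum_{n,m} y_n^{s} y_m^{s}(\log y_n - \log y_m)^2 \ge 0$, this is a genuine upper bound on the fluctuations of $\log Y$ under the tilted law.

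Before attacking the bound I would pin down the extremal configuration. For the affine sequence $y_n = n$ on $\{1,\dots,N\}$, Euler--Maclaurin gives $\mathrm{Var}_{\mu_s}(\log Y) \to (s+1)^{-2}$ as $N \to \infty$, matching exactly the continuous value $\mathrm{Var}_{g^{s}\,dx}(\log g) = (s+1)^{-2}$ for $g(x) = x$ on $(0,1)$; this confirms sharpness and explains the normalizing factor, since for affine $g$ the quantity $(t+\gamma)\int g^{t/\gamma}$ is log-\emph{linear} in $t$. In particular no slack is available, so the monotonicity and the concavity of $(y_n)$ must both be used to full strength.

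The continuous input is the statement that for a nonnegative concave $g$ on a bounded interval, $s \mapsto \log\!\left((s+1)\int g^{s}\right)$ is concave, equivalently $\mathrm{Var}_{g^{s}\,dx}(\log g) \le (s+1)^{-2}$. To transfer it to the discrete sum I would let $g$ be the piecewise-linear, hence concave, interpolant of $(y_n)$ on $[1,N]$. On each cell $[n,n+1]$ the function $g$ is affine, so $s \mapsto \int_n^{n+1} g(x)^{s}\,dx$ is a convex function of $s$ trapped between the boundary values $y_n^{s}$ and $y_{n+1}^{s}$; summing over cells and using monotonicity of $(y_n)$ to make the telescoping clean yields two-sided estimates comparing $\sum_n y_n^{s}$ with $\int_1^N g^{s}$ up to boundary contributions from $y_1$ and $y_N$. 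Differentiating twice and combining with the continuous inequality for $g$ would then give the claim up to a controlled error.

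The crux --- and the step I expect to be genuinely hard --- is controlling that error: the interpolation does not make $\sum_n y_n^{s}$ a log-affine multiple of $\int_1^N g^{s}$, so one is left with correction terms whose contribution to the \emph{second} logarithmic derivative of $K$ must be shown not to exceed the $(s+1)^{-2}$ budget, and (as the paper's symmetric-geometric example shows) in the discrete world such corrections can carry the wrong sign once monotonicity is dropped --- so any proof must exploit monotonicity quantitatively at precisely this point. A parallel, self-contained route worth pursuing is induction on $N$: removing an extreme term $y_1$ or $y_N$ preserves monotone concavity, and one would look for a suitable monotone behaviour of $s \mapsto M_0 M_2 - M_1^2$ under this peeling; the fact that equality forces $(y_n)$ to be affine --- hence to admit no term removable ``for free'' --- makes such an induction plausible, though quantifying the inductive step is again the main obstacle.
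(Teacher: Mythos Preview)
The statement is a \emph{conjecture} in the paper; the paper does not prove it. What the paper offers is a different suggested line of attack: following Cohn \cite{cohn1969some}, one would try to establish the complex-variable inequality $|K(z)| \ge K(\operatorname{Re} z)$ for $\operatorname{Re} z > -\gamma$, which is known to imply log-concavity of $K$ on the real line. So there is no ``paper's own proof'' to compare against; your proposal should be read as an alternative strategy to the one the paper hints at.

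Your reformulation is correct and worth recording: the reduction to $\gamma=1$ is valid, the second-derivative computation is right, and the conjecture is equivalent to the varentropy bound $\mathrm{Var}_{\mu_s}(\log Y) \le (s+1)^{-2}$ for all $s>-1$. The identification of the affine sequence as the (asymptotic) extremal is also correct, and it pinpoints exactly why the problem is delicate.

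That said, the interpolation step is not yet a proof, and I do not see how to close it along the lines you sketch. The difficulty is structural, not just technical: the continuous inequality for concave $g$ is saturated by affine $g$, so it gives you \emph{exactly} $(s+1)^{-2}$ with no slack. Your plan is to write $\sum_n y_n^s = \int_1^N g^s + (\text{boundary/correction terms})$ and carry the continuous bound over; but since the continuous bound is already tight, any nonzero correction to the second logarithmic derivative can push you the wrong way. What you would actually need is a \emph{pointwise-in-$s$} inequality of the form $\mathrm{Var}_{\mu_s^{\text{disc}}}(\log Y) \le \mathrm{Var}_{g^s\,dx}(\log g)$, and nothing in the piecewise-linear comparison produces that --- the trapezoid-type bounds you mention control $M_0$, not the ratio $(M_0M_2-M_1^2)/M_0^2$. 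The induction-on-$N$ idea has the same issue: removing an endpoint changes all three of $M_0,M_1,M_2$, and I do not see a monotonicity of the varentropy under this peeling that survives the tight extremal. In short, both routes you propose run into the same wall --- zero slack at the extremal --- and neither sketch explains how monotonicity is used quantitatively to get around it. The paper's complex-analytic suggestion sidesteps the varentropy formulation entirely, which may be why Cohn's original argument succeeds in the continuous case.
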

The approach from \cite{cohn1969some} also suggests that the following stronger inequality be true: for every complex number $z = u+iv$ with $u > -\gamma$, we have $|K(z)| \geq K(u)$. It  turns out that it implies Conjecture \ref{conj:sum-strong} (see \cite{cohn1969some} for a detailed argument).

As a final remark, we also point out that Conjecture \ref{conj:sum} has an  application to Khinchine-type inequalities: it would furnish a large family of examples of the so-called ultra sub-Gaussian random variables, see Remark 13 in \cite{havrilla2019sharp}. Conjecture \ref{conj:sum} was verified therein for sequences of length $3$.

\section{Application to R\'enyi Entropic Rogers-Shephard Inequality}\label{sec: RS}
It is a classical theorem of Convex Geometry called the Rogers-Shephard Inequality \cite{RS57} that for a convex body $K \subseteq \mathbb{R}^d$ (a compact convex set with non-empty interior), we have
\begin{align} \label{eq: Rogers-Shephard}
    Vol(K-K) \leq \binom{2d}{d} Vol(K),
\end{align}
with equality if and only if $K$ is the $d$-dimensional simplex.  
This can be easily rephrased as a R\'enyi entropic inequality if we recall the usual definition of the R\'enyi entropy for continuous variables.
The Rogers-Shephard inequality thus says that for independent $X$ and $Y$ with a common density function $f$ on $\mathbb{R}^d$ supported on a convex body in $\R^d$,
\begin{align}
    h_0(X-Y) \leq h_0(X) + \log {2d \choose d }.
\end{align}
Note that by Stirling's formula $\log {2d \choose d} \approx d \log 4$ and the best possible dimension independent $c$ such that $h_0(X-Y) \leq h_0(X) + d \log c$ holds is $c = 4$.

An entropic analog of Rogers-Shephard has been pursued in \cite{madiman2016entropy}, where the following is conjectured.
\begin{conj}[Madiman-Kontoyannis \cite{madiman2016entropy}]\label{conj:M-K}
For log-concave $X$ and $Y$ iid $\mathbb{R}^d$ valued random variables
\begin{align}\label{eq:M-K}
    h(X-Y) \leq h(X) + d \log {2}
\end{align}
with the equality case given by the $d$-dimensional exponential product distribution.
\end{conj}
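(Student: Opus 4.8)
The $\infty$-R\'enyi analogue of \eqref{eq:M-K} is clean and already sharp, and it sets the template for the whole problem. Writing $f$ for the common density of $X$ and $Y$, the density of $X-Y$ is $g=f*\tilde f$ with $\tilde f(x)=f(-x)$; this $g$ is symmetric and log-concave, so its maximum is attained at $0$, whence $\|g\|_\infty=g(0)=\int f^2$ and therefore $h_\infty(X-Y)=h_2(X)$. Combining this with Theorem~\ref{thm:FLMW} applied with $\alpha=2$, $\beta=\infty$ and $Z$ a product of standard exponentials gives $h_\infty(X-Y)=h_2(X)\le h_\infty(X)+\big(h_2(Z)-h_\infty(Z)\big)=h_\infty(X)+d\log 2$, with equality for the exponential product. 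So the exponential should be extremal for every order, and the constant should be read off from it.

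For the Shannon statement the first move is $h(X-Y)=h(X)+I(Y;X-Y)$ — conditioning on $Y$ turns $X-Y$ into a translate of $X$ — so \eqref{eq:M-K} is the reverse-type bound $I(Y;X-Y)\le d\log 2$. Crude estimates already give $h(X-Y)\le h_\infty(X-Y)+d=h_2(X)+d\le h(X)+d$ (using Theorem~\ref{thm:BM}), i.e.\ $I(Y;X-Y)\le d$. In one dimension the continuous analogue of the majorization/Schur-concavity machinery of Sections~\ref{sec: two sided geos}--\ref{sec: Proof} sharpens the middle step: $g=f*\tilde f$ majorizes the Laplace density with the same $\ell^\infty$-norm $\int f^2$, so $h(X-Y)\le h_2(X)+1$, sharp for the exponential but still bounding by $h_2(X)+1$ rather than $h(X)+\log 2$. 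Thus even the one-dimensional conjecture requires an argument that does not simply discard the gap $h(X)-h_2(X)$ — e.g.\ a Pr\'ekopa--Leindler or perspective-function argument in the spirit of the continuous proofs of Theorems~\ref{thm:BM} and \ref{thm:FLMW}.

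For general $d$ the natural plan is the entropy chain rule,
\[
h(X-Y)-h(X)=\sum_{i=1}^d\Big(h\big((X-Y)_i\mid (X-Y)_{<i}\big)-h\big(X_i\mid X_{<i}\big)\Big),
\]
and to bound each summand by $\log 2$; this is plausible since $(X,Y)$ is log-concave on $\mathbb{R}^{2d}$, so $X-Y$ is log-concave on $\mathbb{R}^d$ and all the conditional laws appearing are log-concave on $\mathbb{R}$. The main obstacle is that the termwise comparison is not an instance of the one-dimensional inequality: the conditional law of $(X-Y)_i$ given $(X-Y)_{<i}$ need not be a difference of two i.i.d.\ log-concave variables, and conditioning on $(X-Y)_{<i}$ is not conditioning on $X_{<i}$. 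Circumventing this seems to need a more robust one-dimensional ingredient, say $h(U-V)\le\max\{h(U),h(V)\}+\log 2$ for independent, not necessarily identical, log-concave $U,V$, used conditionally with $U=X_i\mid X_{<i}$, $V=Y_i\mid Y_{<i}$, together with careful accounting of the mutual-information error terms produced by the mismatch of conditionings. A parallel, more paper-native route is to establish the sharp R\'enyi inequality $h_\alpha(X-Y)\le h_\alpha(X)+d\,c_\alpha$ for a band of $\alpha\neq1$ — exploiting reverse Young-type convolution bounds for log-concave densities and the concavity of $t\mapsto\log\!\big(t\int f^t\big)$ — and then to let $\alpha\to1$, the difficulty being that $c_\alpha$ need not be monotone or continuous at $\alpha=1$ in the way that would yield \eqref{eq:M-K}. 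In all cases the equality analysis reduces to equality in the one-dimensional ($\alpha=1$ or $\alpha=\infty$) ingredient and in the chain-rule splitting, which should force $f$ to be an affine image of a product of one-dimensional exponentials.
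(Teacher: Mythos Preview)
The statement you are addressing is Conjecture~\ref{conj:M-K}, which the paper presents as an \emph{open} conjecture of Madiman and Kontoyannis; there is no proof of it in the paper, nor in the literature. What the paper does prove is Theorem~\ref{thm: Renyi entropic Rogers-Shephard}, which for $\alpha=1$ recovers only the weaker bound $h(X-Y)\le h(X)+d\log e$ cited just after the conjecture.

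Your proposal is not a proof either, and you are candid about this: it is a survey of approaches together with an honest accounting of why each falls short of $d\log 2$. The pieces you assemble are correct and coincide with the paper's own toolkit. The identity $h_\infty(X-Y)=h_2(X)$ is exactly \eqref{eq:oo-2}, and your chain $h(X-Y)\le h_\infty(X-Y)+d=h_2(X)+d\le h(X)+d$ is precisely the $\alpha=1$ instance of the paper's proof of Theorem~\ref{thm: Renyi entropic Rogers-Shephard}, recovering the bound attributed there to \cite{BM13:goetze}. Your $\alpha=\infty$ inequality $h_\infty(X-Y)\le h_\infty(X)+d\log 2$ is the $\alpha=\infty$ case of \eqref{eq:RS}. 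In short, you have rediscovered what is known and stopped where everyone else has stopped.

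The chain-rule programme and the ``more robust one-dimensional ingredient'' are speculative notes, not arguments, and you identify the obstructions yourself: the conditional law of $(X-Y)_i$ given $(X-Y)_{<i}$ is not a difference of i.i.d.\ log-concave variables, the two conditionings do not align, and the proposed strengthening $h(U-V)\le\max\{h(U),h(V)\}+\log 2$ for independent non-identical log-concave $U,V$ is an unproved statement at least as hard as the target. These are not gaps to be patched; they are precisely why Conjecture~\ref{conj:M-K} remains open. The correct framing is therefore not a proof attempt but a remark that the conjecture is open, that the argument behind \eqref{eq:oo-2} and Corollary~\ref{cor: FMW} settles the $\alpha\ge 2$ R\'enyi analogue sharply, and that the Shannon case $d\log 2$ lies beyond these tools.
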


In the same article Madiman and Kontoyannis prove that under the same hypotheses 
$
    h(X-Y) \leq h(X) + d \log {4},
$
holds.  Moreover using alternate methods in \cite{BM13:goetze} a bound of 
$
    h(X-Y) \leq h(X) + d \log e.
$
was obtained.
In the continuous case we present the following generalization to the R\'enyi entropy.
\begin{theorem} \label{thm: Renyi entropic Rogers-Shephard}
Let $X$ and $Y$ be iid log-concave random vectors in $\R^d$. If $\alpha \in [2,\infty]$, then
\begin{align}\label{eq:RS}
        h_\alpha(X-Y) \leq  h_\alpha(X) + d \log 2,
    \end{align}
    with equality when $X$ has exponential distribution $\mathbbm{1}_{(0,\infty)^d}(x) e^{- \sum_i x_i}$.
    If $\alpha  \in [0,2]$, then
    \begin{align}
        h_\alpha(X-Y)  \leq h_\alpha(X) + d \log \alpha^{\frac 1 {\alpha-1}}.
    \end{align}
\end{theorem}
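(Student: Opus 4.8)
The plan is to reduce everything to the log-concave Rényi reversals already recorded (Theorems \ref{thm:BM} and \ref{thm:FLMW}) by means of one structural observation about the density of $X-Y$. Write $f$ for the common log-concave density of $X$ and $Y$, and let $g$ be the density of $X-Y$; then $g=f\star\check f$ with $\check f(x)=f(-x)$, so $g$ is again a log-concave probability density (a convolution of log-concave functions is log-concave). The first step I would isolate is that the maximum of $g$ is attained at the origin and equals $\int f^2$: by Cauchy--Schwarz $g(z)=\int f(x)f(x-z)\,dx\le\|f\|_2^2=g(0)$. Consequently
\[
h_\infty(X-Y)=-\log\|g\|_\infty=-\log\int f^2=h_2(X).
\]

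Next I would invoke the log-concave reversal of Theorem \ref{thm:BM} in the form valid for every order: for any log-concave density $h$ on $\mathbb R^d$ and any $\alpha\in(0,\infty)$,
\[
h_\alpha(h)\le h_\infty(h)+d\,\tfrac{\log\alpha}{\alpha-1},
\]
which is the $\beta\to\infty$ instance of Theorem \ref{thm:FLMW} (and for $\alpha=1$ is Theorem \ref{thm:BM} itself); a self-contained route is to integrate the pointwise bound $h(sx)\ge h(x)^s\|h\|_\infty^{1-s}$ coming from log-concavity with maximum at $0$, which gives $\int h^s\ge s^{-d}\|h\|_\infty^{s-1}$ for $s\ge1$ and the reverse inequality for $s\in(0,1)$, i.e.\ precisely the displayed bound after rewriting in terms of $h_\alpha,h_\infty$. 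Applying this with $h=g$ and using the identity from Step~1,
\[
h_\alpha(X-Y)\le h_2(X)+d\,\tfrac{\log\alpha}{\alpha-1}\qquad\text{for all }\alpha\in(0,\infty].
\]

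Then I would close the two parameter regimes separately. If $\alpha\in(0,2]$, monotonicity of the Rényi entropy in the order gives $h_2(X)\le h_\alpha(X)$, so the last display becomes $h_\alpha(X-Y)\le h_\alpha(X)+d\log\alpha^{1/(\alpha-1)}$, which is the second claim. If $\alpha\in[2,\infty]$, I would instead apply Theorem \ref{thm:FLMW} with orders $2<\alpha$ and $Z$ the product exponential to get $h_2(X)-h_\alpha(X)\le h_2(Z)-h_\alpha(Z)=d\log2-d\tfrac{\log\alpha}{\alpha-1}$; substituting this into the last display cancels the $\tfrac{\log\alpha}{\alpha-1}$ terms and yields $h_\alpha(X-Y)\le h_\alpha(X)+d\log2$ (for $\alpha=\infty$ one uses Step~1 together with the $\alpha=2$ case of the bound in Step~2 applied to $f$, namely $h_2(X)\le h_\infty(X)+d\log2$). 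Finally, for $f(x)=\mathbbm 1_{(0,\infty)^d}(x)e^{-\sum_i x_i}$ a direct computation shows $X-Y$ has the product Laplace density $2^{-d}e^{-\sum_i|z_i|}$ and $h_\alpha(X-Y)-h_\alpha(X)=d\log2$ for every $\alpha$, which settles the equality case on the stated range.

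The only genuinely new ingredient is Step~1 — the identity $h_\infty(X-Y)=h_2(X)$, which converts an $\infty$-order statement about $X-Y$ into a $2$-order statement about $X$; I expect this to be the crux, and everything afterward is bookkeeping with monotonicity of Rényi entropies and the reversal inequalities already available. The only technical care needed is finiteness of the integrals involved, which is automatic since log-concave probability densities are bounded and have exponentially decaying tails.
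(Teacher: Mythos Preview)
Your proposal is correct and follows essentially the same approach as the paper: the key identity $h_\infty(X-Y)=h_2(X)$, followed by the log-concave R\'enyi reversal (Corollary~\ref{cor: FMW}) applied to $X-Y$, and then either monotonicity ($\alpha\le 2$) or another application of the reversal to $X$ ($\alpha\ge 2$). The only cosmetic difference is that you obtain $\|g\|_\infty=g(0)$ via Cauchy--Schwarz, whereas the paper argues via evenness and log-concavity of $g$; both are valid.
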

Note that when $\alpha = 1$ we recover \cite{BM13:goetze}, the sharpest known bound in the Shannon case $h(X-Y) \leq h(X) + d \log e$.
We will need the following corollary of Theorem \ref{thm:FLMW}.

\begin{cor} [Fradelizi-Madiman-Wang \cite{FMW16}] \label{cor: FMW}
    For $0 \leq \alpha \leq \beta$, and $X$ a log-concave vector in~$\mathbb{R}^d$, we have
    \begin{align}
        h_\beta(X) \leq h_\alpha(X) \leq h_\beta(X) + d \log \frac{c(\alpha)}{ c(\beta) }
    \end{align}
    where
    \begin{align}
        c(\alpha) = \alpha^{\frac 1 {\alpha-1}} ,
    \end{align}
    with $c(\infty) \coloneqq 1, c(1) \coloneqq e$, and $c(0) \coloneqq \infty$.
\end{cor}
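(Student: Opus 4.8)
The left-hand inequality $h_\beta(X)\le h_\alpha(X)$ is just the monotonicity \eqref{eq: continuous monotonicity of Renyi entropy} of the R\'enyi entropy in its order, which holds for any random vector, so the content lies entirely in the right-hand inequality. My plan is to deduce it from Theorem \ref{thm:FLMW} by evaluating the extremal gap on the product exponential. Fix first $0<\alpha<\beta$ with $\alpha,\beta\neq1$. Theorem \ref{thm:FLMW} gives
\[
h_\alpha(X)-h_\beta(X)\;\le\;h_\alpha(Z)-h_\beta(Z),
\]
where $Z$ has density $f(x)=e^{-\sum_{i=1}^d x_i}$ on $(0,\infty)^d$; thus it remains only to identify the right-hand side with $d\log\frac{c(\alpha)}{c(\beta)}$.

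That identification is a short computation. For $t>0$,
\[
\int_{(0,\infty)^d} f^t(x)\,dx=\prod_{i=1}^d\int_0^\infty e^{-tx_i}\,dx_i=t^{-d},
\]
so for $t\neq1$
\[
h_t(Z)=\frac{\log t^{-d}}{1-t}=\frac{d\log t}{t-1}=d\log\!\left(t^{\frac1{t-1}}\right)=d\log c(t),
\]
and hence $h_\alpha(Z)-h_\beta(Z)=d\log\frac{c(\alpha)}{c(\beta)}$, which rearranges to the asserted bound. For the boundary orders one checks that $h_t(Z)=d\log c(t)$ persists: $h_1(Z)=h(Z)=d=d\log c(1)$ as $Z$ is a product of standard exponentials, $h_\infty(Z)=-\log\|f\|_\infty=0=d\log c(\infty)$, and $h_0(Z)=\log\mathrm{Vol}\big((0,\infty)^d\big)=+\infty=d\log c(0)$. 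The remaining cases of the corollary (when $\alpha$ or $\beta$ belongs to $\{1,\infty\}$, or $\alpha=0$) then follow from the case $\alpha,\beta\in(0,1)\cup(1,\infty)$ by letting the orders tend to the boundary values, using that $s\mapsto h_s(X)$ and $s\mapsto h_s(Z)$ are continuous on $(0,\infty]$ for log-concave densities --- finiteness of $\int f^s$ for all $s>0$ comes from the exponential decay of log-concave tails, and dominated convergence yields continuity --- while the case $\alpha=0$ is trivial since the right-hand side is then $+\infty$.

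There is no real obstacle here: the statement is a direct corollary of Theorem \ref{thm:FLMW} combined with the explicit computation for $Z$. The only step needing any care is the passage to the boundary orders $0,1,\infty$, i.e.\ justifying the continuity and finiteness of $s\mapsto h_s$ for log-concave densities and handling the degenerate order $\alpha=0$ separately.
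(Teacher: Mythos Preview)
Your proof is correct and follows essentially the same route as the paper: apply Theorem \ref{thm:FLMW} and then compute $h_t(Z)=d\log c(t)$ for the product exponential $Z$ to identify the gap. The paper's version is terser and does not spell out the boundary cases $\alpha,\beta\in\{0,1,\infty\}$ as you do, but the argument is the same.
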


\begin{proof}
   If $Z$ is a random variable with density $e^{-x}\mathbbm{1}_{(0,\infty)}$, then $h_\alpha(Z) = \frac{\log \int_0^\infty e^{-\alpha x} dx}{1-\alpha} = \log \alpha^{\frac 1 {\alpha - 1}}$.  
 Let $Z_d$ be a random vector in $\R^d$ with density $e^{- \sum_i x_i}\mathbbm{1}_{(0,\infty)^d}$. By Theorem \ref{thm:FLMW},
   \begin{align}
       h_\alpha(X) &\leq h_\beta(X) + h_\alpha(Z_d) - h_\beta(Z_d)
   \end{align}
and $h_\alpha(Z_d) - h_\beta(Z_d)= 
                d (h_\alpha(Z) - h_\beta(Z))
            =
                d \log \frac{c(\alpha)}{c(\beta)}.$
\end{proof}

\begin{proof}[Proof of Theorem \ref{thm: Renyi entropic Rogers-Shephard}]
The result hinges on the following equality,
\begin{align}\label{eq:oo-2}
    h_\infty(X-Y) = h_2(X).
\end{align}
Explanation: letting $f$ denote the shared density of $X$ and $Y$, the density function $f_{X-Y}$ of $X-Y$ is given by
\begin{align}
    f_{X-Y}(z) = \int_{\mathbb{R}^d} f(z - x) f(-x) dx  = \int_{\mathbb{R}^d} f(z+x) f(x) dx,
\end{align}
which is log-concave and even, thus
$    \|f_{X-Y}\|_\infty = f_{X-Y}(0) = 
            \int_{\mathbb{R}^d} f^2(x) dx.
$

For $\alpha \geq 2$,
\begin{align}
    h_\alpha(X - Y) &\leq h_\infty(X-Y) + d \log \frac{c(\alpha)}{c(\infty)}
        \\
            &=
                h_2(X) +d \log \frac{c(\alpha)}{c(\infty)}
                    \\
            &\leq \label{eq:RS-proof-1}
                h_\alpha(X) + d\log \frac{c(2)}{c(\alpha)} + d\log \frac{c(\alpha)}{c(\infty)}
                    \\
            &= \label{eq:RS-proof-2}
                h_\alpha(X) + d\log \frac{c(2)}{c(\infty)}.
\end{align}
Since $\frac{c(2)}{c(\infty)} = 2$, this completes the first proof.  Moreover, this result is sharp with equality for $\mathbbm{1}_{(0,\infty)^d} e^{- \sum_i x_i}$, which can be verified by checking the $d=1$ case and then tensorizing.

 When $\alpha < 2$,
 \begin{align}
    h_\alpha(X -Y) &\leq h_\infty(X-Y) + d \log \frac{c(\alpha)}{c(\infty)}
        \\
        &= h_2(X) + d \log \alpha^{\frac 1 {\alpha-1}}
            \\
        &\leq
            h_\alpha(X) + d \log \alpha^{\frac 1 {\alpha-1}}.
\end{align}
\end{proof}

\begin{remark}
Identity \eqref{eq:oo-2} was also crucial in \cite{Jiange} in Li's proof of the conjectural entropic Buseman theorem (see \cite{BNT15,MMX17:1}) in the case of R\'enyi entropy of order $2$. In fact, in the symmetric case \eqref{eq:RS} for $\alpha = 2$ follows from Li's result \cite{Jiange}. When $d=1$ and $X$ is symmetric, Conjecture \ref{conj:M-K} is a special case of \cite[Conjecture  1]{BNT15}.  Alternatively, under the symmetry assumption $X-Y$ has the same distribution as $X+Y$, and the result follows from the additive case \cite{CZ94,Yu08:2, MMX17:1} for $\alpha \leq 1$ even when $X$ and $Y$ are dependent.  Under stronger concavity assumptions, there are analogous results for $\alpha \geq 1$, see \cite{XMM16:isit,  LM18:isit, LMM19}.
\end{remark}

From Theorem \ref{thm: infinity comparison}, repeating the above proof mutatis mutandis, we can establish discrete analogs of the R\'enyi entropic Rogers-Shephard.

\begin{theorem} \label{thm: discrete Renyi entropic Rogers-Spephard}
    For $X$ and $Y$ iid log-concave variables on $\mathbb{Z}$,
    \begin{align}
        H_\alpha(X-Y) - H_\alpha(X) 
            < \log c(\alpha),
    \end{align}
    with 
    \begin{align}
        c(\alpha) = \begin{cases} 2 \alpha^{\frac 1 {\alpha-1}}, &\mbox{if } \alpha  \in (2,\infty], \\
 \alpha^{\frac 1 {\alpha - 1}}, & \mbox{if } \alpha \in (0,2].
\end{cases} 
    \end{align}
\end{theorem}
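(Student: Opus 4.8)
The plan is to run the proof of Theorem~\ref{thm: Renyi entropic Rogers-Shephard} \emph{mutatis mutandis}, with the continuous ingredient Corollary~\ref{cor: FMW} replaced by the discrete min-entropy comparison Theorem~\ref{thm: infinity comparison} together with the monotonicity~\eqref{eq: Discrete monotonicity of Renyi entropy}, and the identity~\eqref{eq:oo-2} replaced by its discrete analog. First I would establish
\[
H_\infty(X-Y) = H_2(X).
\]
Writing $p_n = \mathbb{P}(X=n)$, the law of $X-Y$ is $r_k = \sum_n p_n p_{n-k}$, which is symmetric in $k$ and, being a convolution of the log-concave sequences $(p_n)_n$ and $(p_{-n})_n$, is itself log-concave (the class of log-concave sequences with contiguous support is closed under convolution). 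A symmetric log-concave, hence unimodal, sequence attains its maximum at the origin, so $\|r\|_\infty = r_0 = \sum_n p_n^2$, whence $H_\infty(X-Y) = -\log\sum_n p_n^2 = H_2(X)$. In particular $X-Y \in \mathcal{L}(\mathbb{Z})$, so Theorem~\ref{thm: infinity comparison} applies to it.

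For $\alpha \in (0,2]$ I would then chain Theorem~\ref{thm: infinity comparison} applied to $X-Y$, the identity above, and monotonicity (which gives $H_2(X) \le H_\alpha(X)$ precisely because $\alpha \le 2$):
\[
H_\alpha(X-Y) < H_\infty(X-Y) + \log \alpha^{\frac{1}{\alpha-1}} = H_2(X) + \log \alpha^{\frac{1}{\alpha-1}} \le H_\alpha(X) + \log \alpha^{\frac{1}{\alpha-1}},
\]
which is the desired bound with $c(\alpha) = \alpha^{\frac{1}{\alpha-1}}$. For $\alpha \in (2,\infty)$ the first two steps still give $H_\alpha(X-Y) < H_2(X) + \log\alpha^{\frac{1}{\alpha-1}}$, but monotonicity now runs the wrong way; instead I would apply Theorem~\ref{thm: infinity comparison} at order $2$ to $X$, giving $H_2(X) < H_\infty(X) + \log 2$, and then use $H_\infty(X) \le H_\alpha(X)$ to conclude
\[
H_\alpha(X-Y) < H_\alpha(X) + \log\!\left(2\alpha^{\frac{1}{\alpha-1}}\right).
\]
The case $\alpha = \infty$ is immediate from the identity and Theorem~\ref{thm: infinity comparison} at order $2$: $H_\infty(X-Y) = H_2(X) < H_\infty(X) + \log 2 = H_\infty(X) + \log c(\infty)$.

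I do not expect a serious obstacle. The two points needing care are (i) the identity $H_\infty(X-Y) = H_2(X)$, i.e.\ that a symmetric log-concave self-difference is maximized at $0$, which is what makes the min-entropy of $X-Y$ accessible; and (ii) the observation that for $\alpha > 2$ the passage from $H_2(X)$ to $H_\alpha(X)$ can no longer be made for free by monotonicity and must be routed through the min-entropy via Theorem~\ref{thm: infinity comparison}, which is exactly what forces the extra factor $2$ in $c(\alpha)$ on $(2,\infty]$ and the jump of $c$ at $\alpha = 2$.
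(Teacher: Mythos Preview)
Your proposal is correct and follows essentially the same route as the paper: establish the discrete identity $H_\infty(X-Y)=H_2(X)$ via symmetry and log-concavity of the self-convolution, then apply Theorem~\ref{thm: infinity comparison} to $X-Y$ and, for $\alpha>2$, once more to $X$ at order $2$ together with monotonicity $H_\infty(X)\le H_\alpha(X)$. This is exactly the ``mutatis mutandis'' argument the paper indicates, with the details (log-concavity of $X-Y$, the mode-at-zero computation) spelled out more carefully than in the paper's terse one-line proof.
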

The limiting cases $1$ and $\infty$ are understood by the same conventions as Corollary \ref{cor: FMW}, $1^{\frac{1}{1-1}} \coloneqq e$ and $\infty^{\frac 1 {\infty - 1}} \coloneqq 1$.  In the case of the Shannon entropy this gives
    \begin{align}
        H(X-Y) - H(X) < \log e.
    \end{align} When $\alpha = 0$, $H_0(X-Y) \leq H_0(X) + \log 2$ holds, and is strict whenever the support of $X$ is finite.  The inequality is sharp as can be seen by taking a log-concave distribution supported on $\{0,1,\dots, n\}$ for $n$ large.

\begin{proof}
The only modification of the proof of Theorem \ref{thm: Renyi entropic Rogers-Shephard} is in \eqref{eq:RS-proof-1} and \eqref{eq:RS-proof-2}, where we use $H_2(X) < H_\infty(X) + \log \alpha^{\frac{1}{1-\alpha}}$ and $H_\infty(X) \leq H_2(X)$. 
\end{proof}

    Observe that when $X_p$ and $Y_p$ are iid geometric with parameter $p$, and  $\alpha > 0$, then \begin{align}
        \lim_{p \to 0} H_\alpha(X_p-Y_p) - H_\alpha(X_p) = \lim_{p\to 0} \frac{1}{1-\alpha}\log\frac{1+(1-p)^\alpha}{(2-p)^{\alpha}} = \log 2.
    \end{align}  
Thus $c(\alpha)$ cannot be improved beyond $2$ for any $\alpha$, and hence Theorem \ref{thm: discrete Renyi entropic Rogers-Spephard} is sharp for $\alpha \in \{2,\infty\}$.

\bibliographystyle{plain}
\bibliography{bibibi}
\end{document}